\newtheorem{theorem}{Theorem}[section]
\newtheorem{lemma}{Lemma}[section]
\newtheorem{corollary}{Corollary}[section]
\theoremstyle{definition}
\newtheorem*{definition}{Definition}
\numberwithin{equation}{section}
\newcommand{\mes}{\operatorname{mes}}
\newcommand{\bigzero}{\text{\rm\huge0}}
\begin{document}

\title[Density function of real algebraic numbers]{On the density function of the distribution\\of real algebraic numbers}

\author{\sc Denis Koleda}
\address{Denis Vladimirovich Koleda (Dzianis Kaliada)\\
Institute of Mathematics\\
National Academy of Sciences of Belarus\\
220072 Minsk\\
Belarus}
\email{koledad@rambler.ru}

\subjclass[2010]{11N45 (primary), 11J83, 11K38 (secondary)}

\keywords{real algebraic numbers, distribution of algebraic numbers, integral polynomials, generalized Farey sequences}

\thanks{Research was partially supported by grant SFB 701}

\maketitle

\begin{abstract}
%

In this paper we study the distribution of the real algebraic numbers.
Given an interval $I$, a positive integer $n$ and $Q>1$,
define the counting function $\Phi_n(Q;I)$ to be the number of algebraic numbers in $I$ of degree $n$ and height $\le Q$.
Let $I_x = (-\infty,x]$. The distribution function is defined to be the limit (as $Q\to\infty$) of $\Phi_n(Q;I_x)$ divided by the total number of real algebraic numbers of degree $n$ and height $\le Q$.
We prove that the distribution function exists and is continuously differentiable.
We also give an explicit formula for its derivative (to be referred to as the distribution density) and establish an asymptotic formula for $\Phi_n(Q;I)$ with upper and lower estimates for
the error term in the asymptotic.
These estimates are shown to be exact for~$n \ge 3$.
One consequence of the main theorem is the fact that the distribution of real algebraic numbers of degree $n \ge 2$ is non-uniform.
\end{abstract}

\bigskip

\begin{section}{Introduction and main results}
This paper was inspirited by two famous results: the equidistribution of the Farey fractions and the fact that real algebraic numbers form a regular system. So we briefly describe the background of our investigation.

The classical Farey sequence $\mathcal{F}_Q$ of order $Q$ is formed by irreducible rational fractions in~$[0,1]$ having denominators at most $Q$ and arranged in increasing order:
\[
\mathcal{F}_Q := \left\{\frac{a}{b}: a,b\in\mathbb{Z}, \ 0\le a \le b \le Q, \ \gcd(a,b)=1 \right\}.
\]
The cardinality of $\mathcal{F}_Q$ has the following asymptotics \cite[p. 144, Satz 1]{Wal1963}:
\begin{equation}\label{eq-Walfisz}
\# \mathcal{F}_Q = \frac{3}{\pi^2} Q^2 + O(Q (\ln Q)^{2/3} (\ln\ln Q)^{4/3}).
\end{equation}
The fact that this sequence is uniformly distributed in $[0,1]$ is well-known, that is, for any interval $I\subseteq [0,1]$
\[
\lim_{Q\to\infty}\frac{\# (\mathcal{F}_Q \cap I)}{\# \mathcal{F}_Q} = |I|.
\]
There are several proofs of this fact (see, e.g. \cite{EKKW1940}, \cite{Mik1949}, \cite{Nied1973}). The discrepancy of the Farey sequence is defined as
\[
D_Q := \sup_{\alpha\in[0,1]} \left|\frac{\# (\mathcal{F}_Q \cap [0,\alpha])}{\# \mathcal{F}_Q} - \alpha\right|.
\]
In 1973, H.~Niederreiter \cite{Nied1973} established the true order of the discrepancy: $D_Q \asymp Q^{-1}$.
In~1999, F.~Dress~\cite{Dre1999} found its true value:
\begin{equation}\label{eq-DQ}
D_Q = \frac{1}{Q}.
\end{equation}
For more bibliography, one can see the nice book \cite[Section 2.4]{KuiNied1974}. 
In 1924, J.~Franel \cite{Fra1924} showed relation between the distribution of Farey series and the Riemann hypothesis. Additional interesting facts and many references can be found in the survey \cite{CobZah2003}. 

In 1970, A.~Baker and W.~M.~Schmidt \cite{BakSch70} introduced the concept of a regular system and proved that the set of real algebraic numbers of degree at most $n$ forms a \emph{regular system},
that is, there exists a constant $c_n$ depending on $n$ only such that for any interval $I$ for all sufficiently large $Q\in\mathbb{N}$  there exist at least
\[
c_n\,|I|\, Q^{n+1} (\ln Q)^{-3n(n+1)}
\]
algebraic numbers $\alpha_1,\dots,\alpha_k$ of degree at most $n$ and height at most $Q$ satisfying
\[
|\alpha_i-\alpha_j|\geq Q^{-(n+1)} (\ln Q)^{3n(n+1)},\quad 1\leq i<j\leq k.
\]
In 1983, their results were improved by V.~I.~Bernik \cite{Bern1983}.
In 1999, concerning the regularity of the set of real algebraic numbers, V.~V.~Beresnevich \cite{Bere99} showed that the logarithmic factors can be omitted.
In the theory of Diophantine approximation, regular systems arose as a useful tool for calculation the Hausdorff dimension of sets of transcendental numbers allowing approximation by algebraic numbers with a given precision. These problems go back to Mahler's investigations and the Khintchine theorem.
For a more detailed discussion of the literature we refer to the monograph by Y.~Bugeaud~\cite{Bug2004}.

In 1971, H.~Brown and K.~Mahler \cite{BroMah1971} proposed a natural generalization of the Farey sequences for algebraic numbers of higher degrees.
However, till recently, an important question about the distribution of these generalized Farey sequences remained unanswered.
In 1985, in a letter to V.~G.~Sprind\v{z}uk, K.~Mahler noticed that it is unknown, even for the second degree, what is the distribution of algebraic numbers.
In a private talk, V.~I.~Bernik conjectured that the real algebraic numbers are distributed uniformly, he also brought some natural heuristic arguments in favour of his conjecture.

This all motivated the author to start his own investigation that finally resulted in this paper.
Our main result (Theorem \ref{thm1} below) is an asymptotic formula for counting real algebraic numbers of arbitrary fixed degree in any interval.
A short scheme \cite{Ka2012} of a proof for all degrees was published in 2012.
For algebraic numbers of the second degree this question was solved in \cite{Ka2013-3}. 
Here, we present a full proof for arbitrary degrees based on the scheme \cite{Ka2012}.

Note that the generalized Farey sequence \cite{BroMah1971} is based on so called ``naive'' height, which is used in our paper too. This height is not the only height function used in number theory. In many applications, the multiplicative Weil height is extensively used.
There is a number of papers concerning the following problem: over a fixed number field, one needs to count all elements of degree $n$ having multiplicative height at most $T$ as $T$ tends to infinity. For example, results of such type are obtained by D.~Masser and J.~D.~Vaaler in~\cite{MasVaa2007}, \cite{MasVaa2008}.

The results of this paper are closely related to the setting of random polynomials. Most of results in this area concern problems, where the degrees of the polynomials grow to infinity. In the context of the paper, two types of results are of interest.
Going in the first direction (see, e.g., \cite{IbrMas71}, \cite{Kac43}, \cite{Kac48}), the average number of real roots of random polynomials is estimated for different conditions on the distribution of polynomial coefficients. The second direction is to study the distribution of zeros of random polynomials on the complex plane, and the landmark result by P.~Erd\H{o}s and P.~Tur\'{a}n \cite{ErdTur1950} states that the arguments of complex roots of random polynomials are uniformly distributed as the degree tends to infinity. For some general conditions on polynomial coefficients, these roots are clustered near the unit circle \cite{IbrZap2013}.

Turning back to the subject of the article, we say that the V.~I.~Bernik conjecture about uniformity is now disproved: it turned out that the real algebraic numbers of higher degrees are nonuniformly distributed in contrast to the rational numbers. However, the fact of equidistribution of the rational numbers can be obtained as a particular case of the degree one using the way proposed here (see also Remark 1 in Section \ref{sec-rem}).

Now, in order to formulate the main theorem, we introduce some notation and terminology.

Let $p(x) = a_n x^n + \ldots + a_1 x + a_0$ be a polynomial of degree $n$,
and let $H(p)$ be its height defined as $H(p) = \max_{0\le i\le n} |a_i|$.
Let $\alpha\in\mathbb{C}$ be an algebraic number with its corresponding minimal polynomial $p\in\mathbb{Z}[x]$, i.e.,
a polynomial with integer coefficients such that $p(\alpha)=0$, and its degree $\deg(p)$ is minimal, and the greatest common divisor of its coefficients equals to 1.
For an algebraic number $\alpha$ its degree $\deg(\alpha)$ and its height $H(\alpha)$ are defined as the degree and the height of the corresponding minimal polynomial.
Let $\mathbb{A}_n(Q)$ denote the set of real algebraic numbers $\alpha$ of degree $\deg(\alpha)=n$ and height $H(\alpha)\le Q$.

Everywhere $\# M$ denotes the number of elements in a set $M$, and $\mes_{k}M$ denotes 
the $k$--dimensional Lebesgue measure of a set
$M \subset \mathbb{R}^{d}$ ($k \le d$).
The length of an interval $I$ will be denoted as $|I|$.
To denote asymptotic relations between functions, Vinogradov's symbol $\ll$ is going to be used:
the expression $f \ll g$ implies $f \le c_1 g$, where $c_1$ is a positive constant depending only on the degree $n$ of the studied algebraic numbers.
The notation $f \asymp g$ is used for asymptotically equivalent functions, i.e. $g \ll f \ll g$.
The relation $f \ll_{x_1,x_2,\ldots}\, g$ means that implicit constants depend only on quantities $x_1,x_2,\ldots$, and asymptotic equivalence $f \asymp_{x_1,x_2,\ldots}\, g$ is defined similarly.
To emphasize the actual asymptotics of a magnitude $X$, we say that the \emph{true order} of $X$ is $Y$ if $c_1 Y \le |X| \le c_2 Y$ for positive constants $c_1$ and $c_2$ depending, perhaps, on some fixed parameters.

Throughout the paper, we consider polynomials with real coefficients as vectors in Euclidean space.
So the usual Lebesgue measure becomes applicable to sets of polynomials with a fixed degree.

In the case $n=2$, an extra factor $\log Q$ appears in formulas. Therefore, for conciseness, we use the following notation
\[
\ell(n) := \begin{cases}
1, & n = 2,\\
0, & n\ge 3.
\end{cases}
\]

Let $n \in \mathbb{N}$, and $Q>1$. In order to describe the distribution of algebraic numbers, we introduce the following quantity.
\begin{definition}
Let $S\subseteq \mathbb{R}$. For the real algebraic numbers of degree $n$ and height at most $Q$, the \emph{counting function} $\Phi_n(Q,S)$ is defined as
\begin{equation}
\Phi_n(Q, S) = \# (\mathbb{A}_n(Q) \cap S).
\end{equation}
\end{definition}

The asymptotic behavior of the function $\Phi_n(Q,I)$ is described by our main result (as $Q\to\infty$):
\begin{theorem}\label{thm1}
There exists a continuous positive function $\phi_n(x)$ such that for all intervals $I \subseteq \mathbb{R}$
\begin{equation}\label{eq_thm1}
\Phi_n(Q, I) = \frac{Q^{n+1}}{2\zeta(n+1)} \int\limits_{I} \phi_n(x)\, dx + O\!\left(Q^n (\ln Q)^{\ell(n)}\right),
\end{equation}
where $\zeta(x)$ is the Riemann zeta function, and the implicit constant in the big-O notation depends only on the degree $n$.

For infinitely many intervals $I=I(Q)$ the true order of the remainder term is $Q^n$ as $Q\to\infty$.

The function $\phi_n(x)$ has the following properties:
\begin{align}
\phi_n(-x) &= \phi_n(x),  &\forall x \in \mathbb{R}, \label{eq:phieven} \\ 
x^2 \phi_n(x) &= \phi_n\!\left(\frac{1}{x}\right),  &\forall x\in \mathbb{R} \backslash \{0\}, \label{eq:phiinv}
\end{align}
and it can be written explicitly as
\begin{equation}\label{eq5}
\phi_n(x) = \int\limits_{\Delta_n(x)} \left|\sum_{k=1}^n k p_k x^{k-1}\right|\,dp_1\ldots\,dp_n, \qquad x \in \mathbb{R},
\end{equation}
where integration is performed over the region
\[
\Delta_n(x)=\left\{(p_1,\ldots,p_n)\in\mathbb{R}^n : \max\limits_{1\le k\le n} |p_k|\le 1, \
\left|\sum_{k=1}^n p_k x^k \right|\le 1 \right\}.
\]
\end{theorem}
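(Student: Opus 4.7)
The plan is to count real algebraic numbers in $I$ of degree $n$ and height at most $Q$ by first counting pairs $(p,\alpha)$ where $p\in\mathbb{Z}[x]$ has $\deg p=n$, $H(p)\le Q$, and $\alpha\in I$ is a real root of $p$, and then restricting successively to primitive and irreducible $p$. The engine of the argument is a change of variables in the coefficient space of $p(x)=a_nx^n+\cdots+a_0$: on the hypersurface $\{p(\alpha)=0\}$ we may replace $a_0$ by $\alpha$ via $a_0=-\sum_{k=1}^n a_k\alpha^k$, and the Jacobian of this substitution is exactly $|p'(\alpha)|$. Approximating the lattice-point count by its volume therefore yields
\begin{equation*}
\sum_{\substack{p\in\mathbb{Z}^{n+1},\,\deg p=n \\ H(p)\le Q}} \#\{\alpha\in I:p(\alpha)=0\} \;=\; \int_I\!\int_{\Omega(Q,\alpha)} |p'(\alpha)|\,da_1\cdots da_n\,d\alpha \;+\; E(Q,I),
\end{equation*}
with $\Omega(Q,\alpha)=\{(a_1,\dots,a_n)\in\mathbb{R}^n:|a_k|\le Q\text{ for each }k,\;|\sum_{k=1}^n a_k\alpha^k|\le Q\}$, the last condition being $|a_0|\le Q$. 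Rescaling $a_k=Qp_k$, the volume term is precisely $Q^{n+1}\int_I\phi_n(\alpha)\,d\alpha$ with $\phi_n$ as in \eqref{eq5}.

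The three properties distinguishing the minimal polynomial of $\alpha$ are then imposed in turn. Primitivity is handled by M\"obius inversion: writing any nonzero integer polynomial of degree $n$ as $d$ times a primitive one of height at most $Q/d$ and summing $\sum_{d\ge1}\mu(d)/d^{n+1}=1/\zeta(n+1)$ replaces $Q^{n+1}$ by $Q^{n+1}/\zeta(n+1)$ in the leading term, while the weighted sum of errors $\sum_d|\mu(d)|E(Q/d,I)$ stays of the same order as $E(Q,I)$ for $n\ge2$. Irreducibility is imposed by subtracting root contributions of primitive reducible polynomials; by Gauss's lemma these factor as $p=p_1p_2$ in $\mathbb{Z}[x]$, and a direct enumeration of admissible factor pairs bounds their total root count in $I$ by $O(Q^n(\ln Q)^{\ell(n)})$---the logarithm appearing only for $n=2$, where the linear-by-linear factorizations $(a_1x+b_1)(a_2x+b_2)$ proliferate. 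Finally, the convention that the minimal polynomial has positive leading coefficient identifies $p$ with $-p$, so the $(p,\alpha)$-count above equals $2\Phi_n(Q,I)$, which accounts for the factor $\tfrac12$.

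The core technical obstacle will be the uniform bound $E(Q,I)\ll Q^n(\ln Q)^{\ell(n)}$. For $\alpha$ in a compact set, a careful partitioning of $I$ combined with standard lattice-point-versus-volume estimates for each slice $\Omega(Q,\alpha)$ gives an error of order $Q^n$. Two delicate issues remain: (i) the behaviour for large $|\alpha|$, which is reduced to the bounded case by the inversion symmetry $t^2\phi_n(t)=\phi_n(1/t)$ proved below; and (ii) the near-singular locus where $|p'(\alpha)|$ is small, i.e.\ where $p$ has close or coincident real roots, and where the change of variables degenerates so that a separate argument using discriminant or resultant estimates is needed. Precisely this singular contribution is the origin of the extra $\log Q$ factor for $n=2$, while for $n\ge 3$ an additional coefficient's worth of freedom suppresses it. The sharpness of the $Q^n$ order for infinitely many $I=I(Q)$ should be exhibited by a tailored choice of $I$ for which the boundary fluctuations of the slice count all accumulate in the same direction.

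The analytic properties of $\phi_n$ follow directly from \eqref{eq5}. Continuity is obtained by dominated convergence, noting that both $\Delta_n(t)$ and the integrand depend continuously on $t$. Positivity holds because for every $t$ the integrand is nonzero on an open, dense subset of a neighborhood of the origin contained in $\Delta_n(t)$. The evenness $\phi_n(-t)=\phi_n(t)$ comes from the measure-preserving involution $p_k\mapsto(-1)^kp_k$ on $[-1,1]^n$, which carries $\Delta_n(-t)$ onto $\Delta_n(t)$ and leaves $|\sum kp_kt^{k-1}|$ invariant up to sign. The inversion identity $t^2\phi_n(t)=\phi_n(1/t)$ is obtained by reparametrizing the hypersurface $\{p(\alpha)=0\}\subset[-1,1]^{n+1}$ by eliminating $a_n$ instead of $a_0$ and then performing the coefficient-reversal $a_k\mapsto a_{n-k}$; the Jacobian of these two operations produces precisely the factor $t^2$, and conceptually this reflects at the density level the fact that $\alpha\mapsto 1/\alpha$ sends a minimal polynomial of degree $n$ to its reciprocal polynomial, which has the same height.
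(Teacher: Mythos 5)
Your overall architecture is a legitimate alternative to the paper's: you count root--polynomial pairs via the co-area substitution $a_0\mapsto\alpha$ with Jacobian $|p'(\alpha)|$ (a Kac--Rice identity), then impose primitivity by M\"obius inversion and discard reducible polynomials, whereas the paper works with the measure of the set of polynomials having exactly $k$ roots in a short interval, approximates it by the sign-change region $\{p(\alpha)p(\beta)<0\}$, and identifies the density by letting $|I|\to 0$ and using additivity. The M\"obius step, the reducible-polynomial bound, the factor $\tfrac12$, and the symmetry/continuity arguments for $\phi_n$ are all fine in outline. The problem is that the two genuinely hard components of the theorem are named as obstacles but not resolved. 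First, the uniform bound $E(Q,I)\ll Q^n(\ln Q)^{\ell(n)}$: you cannot apply a lattice-point-versus-volume estimate ``for each slice $\Omega(Q,\alpha)$'' and integrate over $\alpha$, because the slices are not a partition of the polynomial space --- a single polynomial contributes to every $\alpha$-slice through its roots. The workable version partitions $I$ into $\asymp|I|/\delta$ short subintervals, applies Davenport's lemma to the region of polynomials with a root in each subinterval, and must then control the overcounting from polynomials with two or more roots in one subinterval. That last control is exactly the paper's Lemma~\ref{lm_Fbound} (measure $\ll |I|^3/\rho^6$, proved via the factorization $(x-\alpha)(x-\beta)g(x)$ and the resultant Jacobian of Lemma~\ref{lm-jacob}); your proposal only gestures at ``discriminant or resultant estimates'' without supplying them, and this is the technical heart of the proof, not a routine step.

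Second, the lower bound --- that the remainder has true order $Q^n$ for infinitely many $I=I(Q)$ --- is left as a hope that ``boundary fluctuations accumulate in the same direction,'' which is not a proof and is not how the sharpness actually arises. The paper's mechanism is elementary and different: a Liouville-type gap (Lemma~\ref{lm_emptyinterv}) shows that around any fixed rational $a/b$ there is an interval of length $\asymp_{a,b}Q^{-1}$ containing \emph{no} algebraic numbers of degree $n$ and height $\le Q$, so $\Phi_n(Q,I)=0$ while the main term is $\asymp Q^{n+1}\cdot|I|\asymp Q^n$. You need this (or an equivalent construction) to claim the second assertion of the theorem. A smaller but real issue: for unbounded intervals you invoke the inversion symmetry of the \emph{density} $\phi_n$, but what reduces the count on $\{|x|>1\}$ to the bounded case is the symmetry $\Phi_n(Q,S^{-1})=\Phi_n(Q,S)$ of the \emph{counting function} itself (the paper's Lemma~\ref{lm-func-eq}), together with the fact that no algebraic number of height $\le Q$ exceeds $Q+1$ in absolute value; the density identity alone does not control the error term there. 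As written, the proposal is a sound plan whose central estimates remain to be proved.
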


Note that in \eqref{eq_thm1} the endpoints of $I$ are quite arbitrary; for example, they even can be functions of $Q$ or take values $-\infty$ and $+\infty$.

For $n=2$ the error term in \eqref{eq_thm1} is actually $O(\left.\arctan t\right|_{t=a}^b Q^2\ln Q + Q^2)$, where the implicit big-O-constant is absolute (see \cite{Ko2015} for a proof).
But here we use $O(Q^2 \ln Q)$ to omit particular details and give a common proof for all degrees $n\ge 2$.

It is interesting to observe that $2^{-n-1}\phi_n$ is identical with the density function of the real roots of a random polynomial with independent coefficients uniformly distributed in $[-1,1]$ (see~\cite{Za2005}).

\begin{definition}
The function $\phi_n(x)$ will be called the \emph{counting density}.
\end{definition}

Theorem \ref{thm1} can be easily interpreted in terms of probability theory.
\begin{definition}
Let $I_x = (-\infty,x]$. For the real algebraic numbers of degree $n$, the \emph{distribution function} $F_n(x)$ is defined by
\begin{equation*}
F_n(x) = \lim_{Q \to \infty} \frac{\Phi_n(Q, I_x)}{\Phi_n(Q, \mathbb{R})}.
\end{equation*}
\end{definition}
Theorem \ref{thm1} ensures that this limit exists and is differentiable w.r.t. $x$.
\begin{definition}
The function
\begin{equation}\label{eq-distr-dens}
\rho_n(x) = F_n'(x)
\end{equation}
will be called the \emph{distribution density}.
\end{definition}

Clearly, the distribution density differs from the counting density only by a constant factor. The latter one is introduced only to simplify formulas.

\end{section}

\section{Auxiliary lemmas}

\begin{lemma}\label{lm-func-eq}
For a set $S\subset\mathbb{R}$ denote
\[
-S := \{-x: x\in S\}, \qquad S^{-1} := \{x^{-1}: x\in S\}.
\]
For any $S\subset\mathbb{R}$, the counting function $\Phi_n(Q,S)$ satisfies the following equations ($0\not\in S$ in the second one):
\begin{align}
\Phi_n(Q,-S) = \Phi_n(Q,S),  \label{eq:Phi-x}\\
\Phi_n(Q,S^{-1}) = \Phi_n(Q,S). \label{eq:Phixinv}
\end{align}
\end{lemma}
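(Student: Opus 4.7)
The plan is to prove both identities by exhibiting explicit height- and degree-preserving involutions of $\mathbb{A}_n(Q)$ that implement, on the algebraic-number level, the maps $x\mapsto -x$ and $x\mapsto x^{-1}$ on $\mathbb{R}$ (respectively on $\mathbb{R}\setminus\{0\}$). The key input is the transformation rule for the minimal polynomial under these two involutions, which immediately reduces each statement to a bijection between $\mathbb{A}_n(Q)\cap S$ and $\mathbb{A}_n(Q)\cap(-S)$ or $\mathbb{A}_n(Q)\cap S^{-1}$.

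For \eqref{eq:Phi-x}, I work with $\tilde p(x) := (-1)^n p(-x)$, where $p(x)=\sum_{k=0}^n a_k x^k \in \mathbb{Z}[x]$ is the minimal polynomial of $\alpha \in \mathbb{A}_n(Q)$. The coefficients of $\tilde p$ are $\pm a_k$, so $\deg \tilde p = n$, $H(\tilde p) = H(p)$, and the $\gcd$ of its coefficients is unchanged; moreover $\tilde p$ is irreducible over $\mathbb{Q}$ if and only if $p$ is, because $q(x)\mapsto (-1)^{\deg q}q(-x)$ is an involution on $\mathbb{Z}[x]$ that respects multiplication. Since $\tilde p(-\alpha)=0$, the polynomial $\tilde p$ is the minimal polynomial of $-\alpha$, and hence $-\alpha\in\mathbb{A}_n(Q)$ with the same height as $\alpha$. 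The map $\alpha\mapsto -\alpha$ is therefore an involution of $\mathbb{A}_n(Q)$ that bijects $\mathbb{A}_n(Q)\cap S$ with $\mathbb{A}_n(Q)\cap(-S)$, yielding~\eqref{eq:Phi-x}.

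For \eqref{eq:Phixinv}, I replace $\tilde p$ by the reciprocal polynomial $p^{*}(x) := x^n p(1/x) = \sum_{k=0}^{n} a_{n-k}\,x^k$. The assumption $0\notin S$ forces $\alpha\neq 0$, and irreducibility of $p$ (for $n\ge 1$) then forces $a_0=p(0)\neq 0$, so $\deg p^{*}=n$; the coefficient multisets of $p$ and $p^{*}$ coincide, so $H(p^{*})=H(p)$ and $\mathrm{cont}(p^{*})=\mathrm{cont}(p)=1$; irreducibility is preserved because $p\mapsto p^{*}$ is multiplicative on polynomials with nonzero constant term. From $p^{*}(1/\alpha)=\alpha^{-n}p(\alpha)=0$ one concludes that $p^{*}$ is the minimal polynomial of $\alpha^{-1}$, and then $\alpha\mapsto\alpha^{-1}$ gives the required bijection between $\mathbb{A}_n(Q)\cap S$ and $\mathbb{A}_n(Q)\cap S^{-1}$. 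The only step that requires a touch of care -- and the main potential obstacle -- is checking that irreducibility and primitivity survive both involutions; this follows from the multiplicativity of the two maps and the fact that neither sends a nonconstant polynomial to a unit of $\mathbb{Z}[x]$.
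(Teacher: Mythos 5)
Your proposal is correct and follows essentially the same route as the paper: both identities are obtained from the involutions $p(x)\mapsto \pm p(-x)$ and $p(x)\mapsto x^n p(1/x)$ on minimal polynomials, which preserve degree, height, primitivity and irreducibility and hence induce bijections of $\mathbb{A}_n(Q)$ implementing $\alpha\mapsto-\alpha$ and $\alpha\mapsto\alpha^{-1}$. Your treatment is somewhat more detailed than the paper's (in particular the explicit check that $a_0\neq 0$ and that irreducibility survives via multiplicativity), but the argument is the same.
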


\begin{proof}
To prove the identity \eqref{eq:Phi-x} it is enough to observe that $\alpha\in\mathbb{A}_n(Q)$ if and only if $-\alpha\in\mathbb{A}_n(Q)$ since $P(x) \in \mathbb{Z}[x]$ is equivalent to
$P(-x) \in \mathbb{Z}[x]$, where $\deg P(x) = \deg P(-x)$ and $H(P(x)) = H(P(-x))$.

The property \eqref{eq:Phixinv} follows from a similar argument: $\alpha\in\mathbb{A}_n(Q)$ if and only if
$1/\alpha\in\mathbb{A}_n(Q)$ since $P(x) \in \mathbb{Z}[x]$ is equivalent to
$x^n P\left(\frac{1}{x}\right) \in \mathbb{Z}[x]$, where $n = \deg P(x)$ with $H\left(x^n P\left(\frac{1}{x}\right)\right) = H(P(x))$.
Since $P(x)$ is an irreducible polynomial with $\deg(P)\ge 2$, we have $P(0)\ne 0$ and thus
$\deg x^n P\left(\frac{1}{x}\right) = \deg P(x)$.
\end{proof}

Remark. In terms of $F_n(x)$ equations \eqref{eq:Phi-x} and \eqref{eq:Phixinv} take the form:
\begin{align}
F_n(-x) + F_n(x) &= 1,  \label{eq:F-x}\\
F_n\!\left(\frac{1}{x}\right) + F_n(x) &= \frac{2+\operatorname{sgn}(x)}{2}. \label{eq:Fxinv}
\end{align}

Due to the definition of $F_n(x)$, assuming $I_x = (-\infty,x]$, we immediately obtain \eqref{eq:F-x} from the equality:
\[
-I_x = [-x,+\infty) = \left(\mathbb{R}\setminus (-\infty,-x]\right) \cup \{-x\}.
\]

To prove \eqref{eq:Fxinv}, let us consider the positive and negative values of $x$ separately.

For $x>0$
\begin{equation*}
I_x^{-1} = (-\infty,0)\cup [x^{-1},+\infty) = (-\infty,0) \cup \left(\mathbb{R}\setminus(-\infty,x^{-1}]\right) \cup\{x^{-1}\}.
\end{equation*}

Now let $x<0$. Then
\[
I_x^{-1} = [x^{-1},0) = \left((-\infty,0)\setminus(-\infty,x^{-1}]\right) \cup\{x^{-1}\}.
\]

Note that $F_n(x)$ is continuous and $\lim_{x\to 0^-} F_n(x) = F_n(0)$.

\begin{lemma}[\cite{Dub14}, and \cite{Waer36}]\label{lm_pr_mn}
Let $\mathcal{R}_n(Q)$ be the set of polynomials $p$, where $\deg p = n$, $H(p)\le Q$, and each $p$ is reducible over $\mathbb{Q}$.
For $Q\to \infty$, the cardinality of $\mathcal{R}_n(Q)$ can be asymptotically estimated as
\begin{equation}\label{eq-num-of-reduc}
\#\mathcal{R}_n(Q) \asymp_n Q^n (\ln Q)^{\ell(n)}.
\end{equation}
\end{lemma}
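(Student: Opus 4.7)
The argument rests on the near-multiplicativity of height. Mahler's measure $M(p)$ is exactly multiplicative, $M(qr) = M(q)\,M(r)$, and satisfies $M(p) \asymp_n H(p)$ on polynomials of degree at most $n$; consequently, any factorization $p = q\,r$ in $\mathbb{Z}[x]$ with $\deg p = n$ obeys $H(q)\,H(r) \asymp_n H(p)$. I will also use the elementary count that the number of integer polynomials of degree exactly $k$ and height at most $H$ is $\asymp H^{k+1}$.

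For the upper bound, every $p \in \mathcal{R}_n(Q)$ admits a factorization $p = q\,r$ over $\mathbb{Z}$ with $\deg q = k$ for some $1 \le k \le n-1$, and by the Mahler estimate $H(q)\,H(r) \le C_n\,Q$. Hence
\[
\#\mathcal{R}_n(Q) \le \sum_{k=1}^{n-1} \#\bigl\{(q,r):\ \deg q = k,\ \deg r = n-k,\ H(q)\,H(r) \le C_n Q\bigr\},
\]
and splitting the right-hand side by the value of $H(q) = h$ reduces it to a sum of $h^{k}\,(Q/h)^{n-k+1}$ over $h \le Q$, of order $Q^{n-k+1}\sum_{h \le Q} h^{2k-n-1}$. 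The dominant contribution is at $k = 1$ (and symmetrically $k = n-1$): the remaining sum converges when $n \ge 3$ and is harmonic when $n = 2$, yielding the bound $\ll Q^{n}(\ln Q)^{\ell(n)}$.

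For the lower bound with $n \ge 3$ it is enough to exhibit explicitly many reducible polynomials: $p(x) = x\,r(x)$ with $\deg r = n-1$ and $H(r) \le Q$ is reducible, satisfies $H(p) = H(r)$, and distinct $r$'s give distinct $p$'s, so the count is $\gg Q^{n}$. For $n = 2$ this is too weak, and I instead count products $p = q\,r$ of two primitive integer linear polynomials with $H(q)\,H(r) \le c\,Q$; grouping by the heights $A = H(q)$, $B = H(r)$ gives a harmonic-type estimate $\sum_{A}\sum_{B \le cQ/A} A\,B \asymp Q^{2}\ln Q$ on ordered pairs, and dividing by the bounded factorization multiplicity (swap of $q$ and $r$, and the simultaneous sign change $(q,r)\leftrightarrow(-q,-r)$) leaves $\gg Q^{2}\ln Q$ distinct reducible quadratics.

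The central difficulty is the lower bound for $n = 2$: one must verify that the multiplicity of the map $(q,r)\mapsto q\,r$ on primitive ordered pairs of linear factors is bounded independently of $Q$, and that the restriction to primitive factors (to secure unique factorization over $\mathbb{Z}$ up to sign) costs only a constant density factor, so that the harmonic-type estimate survives as an unconditional lower bound on $\#\mathcal{R}_{2}(Q)$.
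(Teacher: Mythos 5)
Your proof is correct and follows essentially the same route as the paper: the upper bound via the near-multiplicativity of the height under factorization (you invoke Mahler's measure where the paper cites the equivalent Gelfond-type inequality from Prasolov) and a count of factor pairs with $H(q)H(r)\ll_n Q$, isolating the harmonic sum that produces the $\ln Q$ only when $n=2$. Your lower-bound constructions ($x\cdot r(x)$ for $n\ge 3$; products of primitive linear factors with bounded multiplicity via Gauss's lemma for $n=2$) are sound and in fact supply the detail that the paper compresses into ``the lower bounds can be obtained in a similar way.''
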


\begin{proof}
For the reader's convenience, we give a concise proof here.

Let $p(x) = f(x)\, g(x)$, where $f$ and $g$ are integral polynomials with $\deg f = n_1$,
$\deg g = n_2$, $H(f) = H_1$, $H(g) = H_2$, $n_1 + n_2 = n$, $n_1\le n_2$.

It is known (see e.g. \cite[Theorem 4.2.2, p. 144]{Pra01}) that
\begin{equation*}
\left(2^{n_1+n_2-2} \sqrt{n_1+n_2+1}\right)^{-1} H(f) H(g) \le H(p) \le (1+n_1) H(f) H(g).
\end{equation*}

Hence, the number of reducible polynomials in 
$\mathcal{R}_n(Q)$ must not exceed the number of pairs $(f,g)$ with the heights bounded by the condition $H_1 H_2 \le c_1(n) Q$.

For fixed $n_1$ and $n_2$, denote by $\mathcal{PP}_{n_1,n_2}(Q)$ the set of pairs of integral polynomials $(p_1, p_2)$ such that
\[
\deg p_i = n_i, \quad H(p_1) H(p_2) \le c_1(n)Q.
\]

Now the proof can be concluded by writing
\[
\#\mathcal{R}_n(Q) \le \#\bigcup_{\substack{1\le n_1\le n_2\\n_1+n_2=n}} \mathcal{PP}_{n_1,n_2}(Q)
\ll_n 
\sum_{\substack{1\le n_1\le n_2\\n_1+n_2=n}}
\left(
\sum_{\substack{1\le H_1,\, H_2\\H_1 H_2 \le c_1(n)Q}} H_1^{n_1} H_2^{n_2}
\right).
\]
The lower bounds can be obtained in a similar way.
\end{proof}

\begin{lemma}[\cite{Dav51_LP}]\label{thm_intp_num}\label{cr1}
For a finite system of inequalities
\[
F_i(x_1,\dots,x_d)\ge 0, \qquad 1\le i\le k,
\]
where each $F_i$ is a polynomial with real coefficients of degree $\deg F_i \le m$,
let $\mathcal{D}\subset \mathbb{R}^d$ be a bounded set of its solutions.
Let
\[
\Lambda(\mathcal{D}) = \mathcal{D}\cap \mathbb{Z}^d.
\]
Then
\[
\left|\#\Lambda(\mathcal{D}) - \mes_d \mathcal{D}\right| \le C \max(\bar{V}(\mathcal{D}), 1),
\]
where the constant $C$ depends only on $d$, $k$, $m$, and $\bar{V}(\mathcal{D})$ is the maximal $r$-dimensio\-nal measure of projections of $\mathcal{D}$ obtained by equating $d-r$ coordinates of the points in $\mathcal{D}$ to zero, where $r$ takes all values from $1$ to $d-1$, i.e.,
\[
\bar{V}(\mathcal{D}) := \max\limits_{1\le r < d}\left\{ \bar{V}_r(\mathcal{D}) \right\}, \quad
\bar{V}_r(\mathcal{D}) := \max\limits_{\substack{\mathcal{J}\subset\{1,\dots,d\} \\ \#\mathcal{J} = r}}\left\{ \mes_r \operatorname{Proj}_{\mathcal{J}} \mathcal{D} \right\},
\]
where $\operatorname{Proj}_{\mathcal{J}} \mathcal{D}$ is an orthogonal projection of $\mathcal{D}$ onto a coordinate subspace formed by coordinates with indices in $\mathcal{J}$.
\end{lemma}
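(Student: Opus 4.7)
The plan is to argue by induction on the ambient dimension $d$, following Davenport's original strategy, with the induction hypothesis formulated with an explicit constant $C_d = C_d(k,m)$. The base case $d=1$ is elementary: each polynomial $F_i$ of degree at most $m$ has at most $m$ real roots, so $\{x\in\mathbb{R}:F_i(x)\ge 0\}$ is a union of at most $m+1$ intervals, and intersecting over $i=1,\ldots,k$ presents $\mathcal{D}$ as a union of $N = O_{k,m}(1)$ bounded intervals $[a_j,b_j]$. Summing the trivial bound $\bigl|\#([a_j,b_j]\cap\mathbb{Z})-(b_j-a_j)\bigr|\le 1$ yields $|\#\Lambda(\mathcal{D})-\mes_1\mathcal{D}|\le N = C_1(k,m)$, consistent with the convention $\max(\bar V,1)=1$ when the set of admissible projections is empty.

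For the inductive step, I slice $\mathcal{D}$ along the last coordinate: for $t\in\mathbb{R}$ set
\[
\mathcal{D}_t := \bigl\{(x_1,\ldots,x_{d-1})\in\mathbb{R}^{d-1} : (x_1,\ldots,x_{d-1},t)\in\mathcal{D}\bigr\}.
\]
Then $\mathcal{D}_t$ is cut out by the $k$ inequalities $F_i(x_1,\ldots,x_{d-1},t)\ge 0$, of degree $\le m$ in the remaining variables, so the induction hypothesis applies to $\mathcal{D}_t$ uniformly in $t$. Using $\#\Lambda(\mathcal{D})=\sum_{t\in\mathbb{Z}}\#\Lambda(\mathcal{D}_t)$ and $\mes_d\mathcal{D}=\int_{\mathbb{R}}\mes_{d-1}(\mathcal{D}_t)\,dt$, I split the total discrepancy through the intermediate quantity $\sum_{t\in\mathbb{Z}}\mes_{d-1}(\mathcal{D}_t)$ into a slicewise error (bounded by the induction hypothesis applied to each $\mathcal{D}_t$) and a Riemann-sum error (the deviation of the integer sum from the integral).

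The main obstacle is keeping the final constant \emph{linear}, rather than quadratic, in $\max(\bar V(\mathcal{D}),1)$. A naive bound $\bar V(\mathcal{D}_t)\le \bar V(\mathcal{D})$ summed over the $O(\bar V_1(\mathcal{D}))$ nonempty integer slices would cost a square, which is insufficient. The remedy is a Fubini-type interpretation: for each $\mathcal{J}\subset\{1,\ldots,d-1\}$ of size $r$, the sum $\sum_{t\in\mathbb{Z}}\mes_r\operatorname{Proj}_\mathcal{J}(\mathcal{D}_t)$ is itself a Riemann sum for $\int\mes_r\operatorname{Proj}_\mathcal{J}(\mathcal{D}_t)\,dt \le \mes_{r+1}\operatorname{Proj}_{\mathcal{J}\cup\{d\}}\mathcal{D} \le \bar V_{r+1}(\mathcal{D})\le \bar V(\mathcal{D})$, with a Riemann-sum correction bounded by the number of monotone pieces of $t\mapsto \mes_r\operatorname{Proj}_\mathcal{J}(\mathcal{D}_t)$. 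Because $\mathcal{D}$ is semialgebraic of combinatorial complexity depending only on $d,k,m$, this number is $O_{d,k,m}(1)$, and the analogous estimate for $t\mapsto \mes_{d-1}(\mathcal{D}_t)$ controls the outer Riemann-sum error by $O_{k,m}(\bar V_{d-1}(\mathcal{D}))$. Assembling these estimates gives $|\#\Lambda(\mathcal{D})-\mes_d\mathcal{D}|\le C_d\max(\bar V(\mathcal{D}),1)$, which closes the induction and produces the desired constant $C = C_d(k,m)$.
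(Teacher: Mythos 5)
The paper does not actually prove this lemma: it is quoted from Davenport's paper on the principle of Lipschitz, so your proposal can only be measured against Davenport's own argument. Its outer architecture — induction on $d$, slicing along the last coordinate, splitting the discrepancy into a slicewise error plus a sum-versus-integral error, and the correct observation that a naive bound on the slices would cost a factor $\bar V(\mathcal{D})$ too much — does match Davenport's.

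The genuine gap is in your treatment of the Riemann-sum errors. You bound $\bigl|\sum_{t\in\mathbb{Z}}g_{\mathcal{J}}(t)-\int g_{\mathcal{J}}(s)\,ds\bigr|$, where $g_{\mathcal{J}}(t)=\mes_r\operatorname{Proj}_{\mathcal{J}}(\mathcal{D}_t)$, by asserting that $g_{\mathcal{J}}$ is piecewise monotone with $O_{d,k,m}(1)$ pieces ``because $\mathcal{D}$ is semialgebraic of bounded combinatorial complexity''. That inference is not valid as stated: fibre volumes of semialgebraic sets are not semialgebraic functions of the parameter (already for $d=3$ they involve logarithms), so none of the elementary component-counting bounds (Oleinik--Petrovsky, Milnor--Thom, cylindrical decomposition) applies to $g_{\mathcal{J}}$ itself. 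The claim you need is true, but only via the much later theory of integration of subanalytic/constructible functions in o-minimal structures — machinery far beyond what the lemma requires. The elementary repair, which is essentially what Davenport does, is to exchange sum and integral by Fubini: writing $g(t)=\int_{\mathbb{R}^{d-1}}\chi_{\mathcal{D}}(x,t)\,dx$ gives
\[
\sum_{t\in\mathbb{Z}}g(t)-\int_{\mathbb{R}}g(s)\,ds=\int_{\mathbb{R}^{d-1}}\Bigl(\#\{t\in\mathbb{Z}:(x,t)\in\mathcal{D}\}-\mes_1\{s:(x,s)\in\mathcal{D}\}\Bigr)\,dx,
\]
and the integrand is supported on $\operatorname{Proj}_{\{1,\dots,d-1\}}\mathcal{D}$ and bounded in absolute value by the number of intervals in which the vertical line through $x$ meets $\mathcal{D}$, which is at most $k(m+1)$ because each $F_i(x,\cdot)$ is a univariate polynomial of degree at most $m$. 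This yields the bound $\ll_{k,m}\bar V_{d-1}(\mathcal{D})$ with no monotonicity statement at all; the inner sums $\sum_t\mes_r\operatorname{Proj}_{\mathcal{J}}(\mathcal{D}_t)$ are handled identically after noting that $\operatorname{Proj}_{\mathcal{J}}(\mathcal{D}_t)$ equals the $t$-slice of $\operatorname{Proj}_{\mathcal{J}\cup\{d\}}\mathcal{D}$, using the (classical, Tarski--Seidenberg-based) uniform bound on the number of intervals in which a line meets a projection of $\mathcal{D}$. With that substitution your induction closes; without it, the key step is unsupported.
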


For a bounded set $\mathcal{D} \subset \mathbb{R}^d$, let $Q\cdot\mathcal{D}$ denote the set $\mathcal{D}$ scaled by a factor $Q$:
\[
Q\cdot\mathcal{D} := \{Q{\bf x}\in\mathbb{R}^d : {\bf x}\in\mathcal{D}\}.
\]

The following lemma estimates the number of primitive vectors
${\bf x}=(x_1,\ldots,x_d)\in \mathbb{Z}^d$, i.e., vectors such that $\gcd(x_1,\ldots,x_d)=1$, lying within bounded regions in $\mathbb{R}^d$.

\begin{lemma}\label{lm_skrt}
For a finite set of algebraic inequalities
\[
F_i(x_1,\dots,x_d)\ge 0, \qquad 1\le i\le k,
\]
where each $F_i$ is a polynomial with real coefficients of degree $\deg F_i \le m$,
let $\mathcal{D}\subset [-1,1]^d$ be a bounded set of its solutions.

Let $\lambda(\mathcal{D}, Q)$ be the number of primitive vectors of the lattice $\mathbb{Z}^d$ in the region $Q\cdot\mathcal{D}$.

Then asymptotically we have
\begin{equation}\label{eq-numb-of-p}
\lambda(\mathcal{D}, Q) = Q^d \cdot \frac{\mes_d \mathcal{D}}{\zeta(d)} + O\!\left(Q^{d-1} (\ln Q)^{\ell(d)}\right),
\end{equation}
where $\zeta(x)$ is the Riemann zeta function.
The implicit constant in the big-O notation depends only on the dimension $d$, the size of the system $k$ and the maximal degree $m$ of the algebraic inequalities.
\end{lemma}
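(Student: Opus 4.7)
The plan is to reduce the count of primitive lattice points to the count of \emph{all} lattice points via M\"obius inversion, and then invoke Davenport's Lemma \ref{thm_intp_num}. Every nonzero $\mathbf{x}\in\mathbb{Z}^d$ factors uniquely as $\mathbf{x}=g\mathbf{y}$ with $g=\gcd(x_1,\ldots,x_d)\ge 1$ and $\mathbf{y}$ primitive, and $g\mathbf{y}\in Q\cdot\mathcal{D}$ iff $\mathbf{y}\in (Q/g)\cdot\mathcal{D}$. Setting $N(\mathcal{D},T):=\#(T\cdot\mathcal{D}\cap\mathbb{Z}^d)$, this gives
\[
N(\mathcal{D},Q) = \sum_{g=1}^{\lfloor Q\rfloor}\lambda(\mathcal{D},Q/g) + O(1),
\]
where the $O(1)$ absorbs the origin. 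M\"obius inversion then produces
\[
\lambda(\mathcal{D},Q) = \sum_{g=1}^{\lfloor Q\rfloor}\mu(g)\,N(\mathcal{D},Q/g) + O(1).
\]

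Next I would apply Lemma \ref{thm_intp_num} to each summand $N(\mathcal{D},Q/g)$. The region $(Q/g)\cdot\mathcal{D}$ is cut out by the inequalities $F_i(g\mathbf{x}/Q)\ge 0$; clearing the positive factor $(g/Q)^{\deg F_i}$ yields a system with the same count $k$ and degree bound $m$ as the original, so the Davenport constant is uniform in $g$. Since $\mathcal{D}\subset[-1,1]^d$, every $r$-dimensional projection of $(Q/g)\cdot\mathcal{D}$ sits in $[-Q/g,Q/g]^r$, whence $\bar{V}\ll (Q/g)^{d-1}$ for $g\le Q$. Consequently
\[
N(\mathcal{D},Q/g) = (Q/g)^d\mes_d\mathcal{D} + O\!\bigl((Q/g)^{d-1}\bigr).
\]
Inserting this expansion and using the standard tail estimate $\sum_{g>Q}\mu(g)/g^d\ll Q^{1-d}$ (valid for $d\ge 2$), the principal term evaluates to
\[
Q^d\mes_d\mathcal{D}\sum_{g=1}^{\lfloor Q\rfloor}\frac{\mu(g)}{g^d} = \frac{Q^d\mes_d\mathcal{D}}{\zeta(d)} + O(Q),
\]
and $O(Q)$ is absorbed into $O(Q^{d-1}(\ln Q)^{\ell(d)})$ for every $d\ge 2$.

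The Davenport remainder contributes
\[
\sum_{g=1}^{\lfloor Q\rfloor}(Q/g)^{d-1} \ll \begin{cases} Q\ln Q, & d=2,\\ Q^{d-1}, & d\ge 3, \end{cases}
\]
reproducing precisely the factor $(\ln Q)^{\ell(d)}$ (the $d=2$ case is the harmonic sum, whereas for $d\ge 3$ the series $\sum g^{1-d}$ converges). Combining the main and error pieces yields the claim. The only point requiring care is the uniformity of Davenport's constant under the rescaling $\mathcal{D}\mapsto(Q/g)\cdot\mathcal{D}$; this is immediate because the rescaling preserves the combinatorial data $(d,k,m)$ on which that constant depends, so the entire argument reduces to a routine tail-of-$\zeta$ computation.
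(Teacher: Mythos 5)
Your proposal is correct and follows essentially the same route as the paper: the paper's inclusion--exclusion over prime divisors is exactly the M\"obius inversion you perform, and both arguments then apply Lemma~\ref{thm_intp_num} to the dilates $(Q/g)\cdot\mathcal{D}$ (with the same uniformity observation for the Davenport constant and the bound $\bar{V}\ll(Q/g)^{d-1}$), truncate the sum at $g\le Q$, and finish with the tail estimate for $\sum\mu(g)g^{-d}$ and the harmonic-sum dichotomy that produces the factor $(\ln Q)^{\ell(d)}$. No gaps; nothing further to add.
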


Remark. Results of such type are well-known and can be found e.g. in the classical monograph by P. Bachmann \cite[pp. 436--444]{Bach1894} (see formulas (83a) and (83b) on pages~441--442).
For the reader's convenience, we give a short proof here.

\begin{proof}
Without loss of generality, we can exclude the point ${\bf 0}=(0,\ldots,0)$ from counting.

For a positive integer $\nu$, let us count the number of integral points ${\bf x}=(x_1,\ldots,x_d)\in\mathcal{D}$ such that $\nu$ divides $\gcd(x_1,\ldots,x_d)$.
All this points are contained in the lattice $\nu \cdot\mathbb{Z}^d$, and their number in the region $Q\cdot\mathcal{D}$ equals $\#\Lambda\left(\frac{Q}{\nu}\cdot \mathcal{D}\right)$. Applying Lemma~\ref{thm_intp_num}, we have
\begin{equation}\label{eq_LmdQk}
\#\Lambda\left(\frac{Q}{\nu}\cdot\mathcal{D}\right)=\frac{Q^d}{\nu^d} \cdot \mes_d \mathcal{D} +
O\left(\frac{Q^{d-1}}{\nu^{d-1}}\right).
\end{equation}

By applying the inclusion--exclusion principle, we obtain
\begin{multline}\label{eq_lmd_mu}
\lambda(\mathcal{D}, Q)=\#\Lambda(Q\cdot\mathcal{D})-\sum_{q_1} \#\Lambda\left(\frac{Q}{q_1}\cdot\mathcal{D}\right)+
\sum_{q_1<q_2} \#\Lambda\left(\frac{Q}{q_1 q_2}\cdot\mathcal{D}\right)-\ldots+\\
+(-1)^n \sum_{q_1<q_2<\ldots<q_n} \#\Lambda\left(\frac{Q}{q_1 q_2 \ldots q_n}\cdot\mathcal{D}\right)+\ldots =
\sum_{n=1}^\infty \mu(n)\, \#\Lambda\left(\frac{Q}{n}\cdot\mathcal{D}\right),
\end{multline}
where the sums are taken over prime numbers $q_1,q_2,\ldots,q_n$;
$\mu(x)$ is the M\"obius function.

Clearly, for $n > Q$, the lattice $n\cdot\mathbb{Z}^d$ doesn't contain any non-zero points lying in $Q\cdot\mathcal{D}$,
i.e. $\#\Lambda\left(\frac{Q}{n}\cdot\mathcal{D}\right)=0$. Therefore, from \eqref{eq_LmdQk} and 
\eqref{eq_lmd_mu}, we have
\begin{equation*}
\lambda(\mathcal{D}, Q) = Q^d \cdot \mes_d \mathcal{D} \sum_{n=1}^{Q} \frac{\mu(n)}{n^d}+
O\left(Q^{d-1}\sum_{n=1}^{Q}\frac{1}{n^{d-1}}\right).
\end{equation*}

Now applying the well-known facts that for $d\ge 2$
\[
\left|\sum_{n=Q+1}^{\infty} \frac{\mu(n)}{n^d}\right| \, \le \sum_{n=Q+1}^{\infty} \frac{1}{n^d}
\asymp \frac{1}{Q^{d-1}}
\]
as $Q\to\infty$, and that $\sum_{n=1}^\infty \mu(n) n^{-d}=(\zeta(d))^{-1}$, where
$\zeta(d)$ is the Riemann zeta function, completes the proof.
\end{proof}

\begin{lemma}\label{lm-jacob}
Consider a polynomial relation
\begin{equation}\label{eq-p-repr}
a_n x^n +\ldots+a_1 x + a_0 = (x-\alpha)(x-\beta)(b_{n-2}x^{n-2}+\ldots+b_1 x + b_0).
\end{equation}
Then for the vectors $(a_n, \dots, a_1, a_0)$ and $(b_{n-2},\dots, b_1, b_0, \alpha, \beta)$, we can write
\begin{equation}\label{eq_zamena2}
\left(\begin{array}{l}
a_n\\
a_{n-1}\\
a_{n-2}\\
\vdots\\
a_1\\
a_0
\end{array}\right) =
\left(\begin{array}{cccc}
1 & & \phantom{\ddots} & \bigzero\\
-(\alpha+\beta) & 1 & \phantom{\ddots} &\\
\alpha\beta & -(\alpha+\beta) & \ddots &\\
 & \alpha\beta & \phantom{\ddots} & 1\\
 & & \ddots & -(\alpha+\beta) \\
\bigzero & & \phantom{\ddots} & \alpha\beta 
\end{array}\right) \cdot
\left(\begin{array}{l}
b_{n-2}\\
b_{n-3}\\
\vdots\\
b_1\\
b_0
\end{array}\right),
\end{equation}
and the Jacobian $|\det J| = \left| \frac{\partial (a_n,\ldots,a_2,a_1,a_0)}{\partial (b_{n-2},\ldots,b_0;\alpha,\beta)} \right|$ satisfies:
\begin{equation}
|\det J| = |\alpha-\beta| \cdot |g(\mathbf{b},\alpha) g(\mathbf{b},\beta)|,
\end{equation}
where $g(\mathbf{b},x) = b_{n-2}x^{n-2}+\ldots+b_1 x + b_0$.
\end{lemma}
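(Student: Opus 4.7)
The plan is to compute $|\det J|$ by viewing each column of $J$ as the coefficient vector of a polynomial of degree $\le n$ in $x$ and then applying a Vandermonde-based evaluation identity. The matrix identity \eqref{eq_zamena2} itself comes for free: expanding $(x-\alpha)(x-\beta)=x^2-(\alpha+\beta)x+\alpha\beta$ against $g(\mathbf{b},x)=\sum_{j=0}^{n-2}b_j x^j$ sends each $b_j$ to a combination of $x^{j+2},x^{j+1},x^j$ with coefficients $1,-(\alpha+\beta),\alpha\beta$, giving precisely the banded Toeplitz matrix displayed.

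For the Jacobian, I would first read off the columns of $J$ by differentiating $p(x)=(x-\alpha)(x-\beta)g(\mathbf{b},x)$ with respect to each parameter. The $b_j$-column is the coefficient vector of $\pi_{b_j}(x):=x^j(x-\alpha)(x-\beta)$; the $\alpha$-column corresponds to $\pi_\alpha(x):=-(x-\beta)g(\mathbf{b},x)$; and the $\beta$-column to $\pi_\beta(x):=-(x-\alpha)g(\mathbf{b},x)$. Let $\Pi$ be the $(n+1)\times(n+1)$ matrix whose $(i,j)$ entry is the $j$-th column polynomial evaluated at a point $x_i$, and let $V=(x_i^k)_{0\le i,k\le n}$ be the Vandermonde matrix. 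Then $\Pi = V J$, so $\det J = \det\Pi/\det V$ for any choice of $n+1$ distinct evaluation points.

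The magic choice is $x_0=\alpha$, $x_1=\beta$, with $x_2,\dots,x_n$ arbitrary distinct points different from $\alpha,\beta$. Since $(x-\alpha)(x-\beta)$ vanishes at both $\alpha$ and $\beta$, the first $n-1$ columns of $\Pi$ vanish on rows $0$ and $1$, so $\Pi$ has a $2\times(n-1)$ zero block. Laplace expansion along rows $0$ and $1$, combined with the factorization $\det V = (\beta-\alpha)\prod_{i=2}^n (x_i-\alpha)(x_i-\beta)\cdot V_{n-1}(x_2,\dots,x_n)$, causes all dependence on the auxiliary points to cancel, leaving a $2\times 2$ minor with diagonal entries $\pi_\alpha(\alpha)=-(\alpha-\beta)g(\mathbf{b},\alpha)$ and $\pi_\beta(\beta)=(\alpha-\beta)g(\mathbf{b},\beta)$ and vanishing off-diagonal. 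This yields $\det J = (\alpha-\beta)\,g(\mathbf{b},\alpha)\,g(\mathbf{b},\beta)$, which is the claim after taking absolute values. The one point deserving attention is the sign bookkeeping (the cofactor sign $(-1)^{0+1+(n-1)+n}=1$, the sign of the $2\times 2$ minor, and the sign of the Vandermonde factor), but these cancel cleanly by parity.
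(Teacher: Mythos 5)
Your argument is correct, but it proceeds by a genuinely different route from the paper. The paper manipulates the Jacobian determinant directly: it expands along the first row, then performs a column operation (subtracting the $(n-2)$-th column from the $(n-1)$-th and extracting the factor $\alpha-\beta$) that exposes the remaining determinant as the Sylvester resultant $R\bigl((x-\alpha)(x-\beta),\,g\bigr)$ up to sign, and then invokes the classical identity $R(f,g)=g(\alpha)g(\beta)$ for monic $f$. You instead change basis from coefficients to values: writing $\Pi=VJ$ with $V$ a Vandermonde matrix and evaluating at $\alpha$, $\beta$, and $n-1$ auxiliary points makes the $b_j$-columns vanish on the first two rows, so a generalized Laplace expansion along those rows isolates the diagonal $2\times2$ block with entries $-(\alpha-\beta)g(\mathbf{b},\alpha)$ and $(\alpha-\beta)g(\mathbf{b},\beta)$, while the factors $\prod_{i\ge2}(x_i-\alpha)(x_i-\beta)$ and the Vandermonde determinant of the auxiliary points cancel against $\det V$. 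I verified the cancellation (e.g.\ for $n=2$ it gives $\det J=(\alpha-\beta)b_0^2$, as it should), and since the lemma only asserts $|\det J|$, your deferral of the sign bookkeeping is harmless; the only hypotheses you should state explicitly are that $\alpha\ne\beta$ (the degenerate case follows by continuity, as both sides are polynomials in the variables) and the column-ordering convention for $V$, which affects only the sign. In effect your proof re-derives the resultant identity $R(f,g)=g(\alpha)g(\beta)$ in the special case needed, which makes it more self-contained than the paper's (no citation of van der Waerden required) at the cost of introducing auxiliary evaluation points; the paper's version is shorter once the resultant formula is taken as known.
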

\begin{proof}
By the definition, we have
\[
\det J =
\left|
\begin{array}{cccccc}
\frac{\partial a_n}{\partial b_{n-2}} & \frac{\partial a_n}{\partial b_{n-3}}  & \dots & \frac{\partial a_n}{\partial b_0} & \frac{\partial a_n}{\partial \alpha} & \frac{\partial a_n}{\partial \beta} \\
\frac{\partial a_{n-1}}{\partial b_{n-2}} & \frac{\partial a_{n-1}}{\partial b_{n-3}} & \dots & \frac{\partial a_{n-1}}{\partial b_0} & \frac{\partial a_{n-1}}{\partial \alpha} & \frac{\partial a_{n-1}}{\partial \beta} \\
\vdots & \vdots & & \vdots & \vdots & \vdots \\
\frac{\partial a_0}{\partial b_{n-2}} & \frac{\partial a_0}{\partial b_{n-3}} & \dots & \frac{\partial a_0}{\partial b_0} & \frac{\partial a_0}{\partial \alpha} & \frac{\partial a_0}{\partial \beta} \\
\end{array}
\right|.
\]
In the first row, all the entries, except $\frac{\partial a_n}{\partial b_{n-2}}=1$, are equal to zero.
Using Laplace's formula along this row reduces the dimension of this determinant by~1.
Subtracting the $(n-2)$-th column from the $(n-1)$-th and dividing the result by $(\alpha-\beta)$ yields
\begin{equation}\label{eq22}
\det J =
(\alpha-\beta) \cdot \left|
\begin{array}{cccccc}
1 & & \phantom{\ddots} & & -b_{n-2} & 0\\
-(\alpha+\beta) & 1 & \phantom{\ddots} & & -b_{n-3} & b_{n-2}\\
\alpha\beta & -(\alpha+\beta) & \ddots & & \vdots & b_{n-3}\\
 & \alpha\beta & \ddots & 1 & -b_1 & \vdots\\
 & & \ddots & -(\alpha+\beta) & -b_0 & b_1\\
 & & \phantom{\ddots} & \alpha\beta & 0 & b_0
\end{array}
\right|.
\end{equation}
It is easy to see that the determinant in the left-hand side of \eqref{eq22} is equal to the resultant $R(f,g)$ of the polynomials $f(x)=(x-\alpha)(x-\beta)$ and $g(x)= b_{n-2}x^{n-2}+\ldots+b_1 x + b_0$, up to a sign.
This proves the lemma since $R(f,g) = g(\alpha)g(\beta)$ (see, e.g. \cite[\S 35]{Waer}).
\end{proof}

\begin{lemma}\label{lm_Fbound}
Let $I=[a,b)$ be an interval of length at most $1$.
Let $\mathcal{M}_n(I)$ be the set of polynomials $p\in\mathbb{R}[x]$ with $\deg p = n$ and $H(p)\le 1$ that have at least two roots in~$I$.
Then
\begin{equation*}
\mes_{n+1} \mathcal{M}_n(I) \le \frac{c_2(n)}{\rho^6} |I|^3,
\end{equation*}
where $\rho = \rho(I) = \max(1,|a+b|/2)$, and $c_2(n)$ is a constant that depends only on $n$.
\end{lemma}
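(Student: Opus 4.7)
The plan is to apply Lemma~\ref{lm-jacob} and change variables from $(a_0, \ldots, a_n)$ to $(\alpha, \beta, b_0, \ldots, b_{n-2})$, where $\alpha, \beta \in I$ are two roots of $p$ and $g(x) = \sum_{j=0}^{n-2} b_j x^j$ is the cofactor in $p(x) = (x-\alpha)(x-\beta) g(x)$. Every $p \in \mathcal{M}_n(I)$ arises this way, with at most $n(n-1)$ ordered-pair preimages. The area formula combined with the Jacobian $|\alpha-\beta|\,|g(\alpha) g(\beta)|$ then gives
\[
\mes_{n+1}(\mathcal{M}_n(I)) \ll_n \int_{I \times I} |\alpha-\beta| \int_{\mathbf{b}:\, H(p) \le 1} |g(\alpha) g(\beta)|\, d\mathbf{b}\, d\alpha\, d\beta.
\]
Since $|\alpha-\beta| \le |I|$, the outer double integral is at most $|I|^3$, and the task reduces to bounding the inner integral by $c_n/\rho^6$ uniformly in $(\alpha, \beta) \in I^2$. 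A key tool will be the linear triangular relation $a_i = b_{i-2} - (\alpha+\beta) b_{i-1} + \alpha\beta\, b_i$ (setting $b_j = 0$ for $j \notin \{0, \ldots, n-2\}$), which makes the map $\mathbf{b} \leftrightarrow (a_2, \ldots, a_n)$ a unit-Jacobian linear bijection.

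If $\rho = 1$, then $I \subset [-3/2, 3/2]$, so $|\alpha+\beta|$ and $|\alpha\beta|$ are bounded by absolute constants. Back-solving the triangular system using only the constraints $|a_i| \le 1$ for $i \ge 2$ yields $|b_i| \ll_n 1$ on the admissible set; hence $|g(\alpha) g(\beta)| \ll_n 1$ there, and $\mes_{n-1}\{\mathbf{b}: H(p) \le 1\} \le 2^{n-1}$. The inner integral is thus $\ll_n 1 = 1/\rho^6$.

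If $\rho > 1$, then $I$ lies in $[\rho - \frac{1}{2}, \rho + \frac{1}{2}]$ or its reflection, so $|\alpha|, |\beta| \asymp \rho$ and $|\alpha\beta| \ge \rho^2/4$. Here the constraints $|a_0|, |a_1| \le 1$ become decisive, and I would prove by induction on $i$ that $|b_i| \ll_n 1/\rho^2$ on the admissible set: the base cases $i = 0, 1$ follow directly because $|\alpha\beta|$ appears in the denominator when one solves for $b_0, b_1$; the inductive step uses the triangular recursion together with the estimate $|\alpha+\beta|\,|b_{i-1}| \ll_n \rho \cdot \rho^{-2} \le 1$ for $\rho \ge 1$ to keep $|\alpha\beta|\,|b_i| \ll_n 1$. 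Consequently $|g(\alpha) g(\beta)| \ll_n \rho^{2n-8}$ and $\mes_{n-1}\{\mathbf{b}: H(p) \le 1\} \ll_n \rho^{-2(n-1)}$, giving an inner integral bounded by $\rho^{2n-8} \cdot \rho^{-2n+2} = \rho^{-6}$ up to an $n$-dependent constant. The main technical obstacle is this induction: each step introduces a new multiplicative constant, and one must verify that these constants stay finite (though possibly growing with $n$) for all $i$ up to $n-2$. This is guaranteed because the ratio $|\alpha+\beta|/|\alpha\beta| \ll 1/\rho \le 1$ for $\rho \ge 1$, so the recursion does not amplify beyond control.
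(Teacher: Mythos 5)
Your argument is correct, and its first half coincides with the paper's: the change of variables via Lemma~\ref{lm-jacob}, the bound $\int_{I\times I}|\alpha-\beta|\,d\alpha\,d\beta\le|I|^3$, and the treatment of intervals near the origin by back-solving the triangular system from the top down to confine $\mathbf{b}$ to a box of size $O_n(1)$ are exactly the paper's steps. Where you genuinely diverge is the case $\rho>1$. The paper never estimates the $b_i$ there at all: it passes to the reversed polynomial $q(x)=x^n p(1/x)$ and the inverted interval $I^*=(1/b,1/a)\subset[-1,1]$, notes that $|I^*|=|I|/|ab|$ with $|ab|\asymp\rho^2$, and quotes the already-proved near-origin bound, so the factor $\rho^{-6}$ falls out of $|I^*|^3$. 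You instead extract $\rho^{-6}$ directly as $\rho^{2n-8}\cdot\rho^{-2(n-1)}$ from the pointwise bound $|g(\alpha)g(\beta)|\ll_n\rho^{2n-8}$ and the volume bound $\mes_{n-1}\{\mathbf{b}\}\ll_n\rho^{-2(n-1)}$, both consequences of your bottom-up induction $|b_i|\ll_n\rho^{-2}$; that induction is sound, since $b_0=a_0/(\alpha\beta)$, $b_1=(a_1+(\alpha+\beta)b_0)/(\alpha\beta)$, and the recursion multiplies the previous bounds only by $|\alpha+\beta|/|\alpha\beta|\ll\rho^{-1}\le1$, so the constants grow at most geometrically over the $n-1$ steps. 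Both routes work: the paper's inversion is shorter and reuses the $x\mapsto1/x$ symmetry that runs through the whole paper, while yours stays entirely inside the coordinates of Lemma~\ref{lm-jacob} and makes explicit which height constraints (the low-order coefficients $|a_0|,|a_1|,\dots$ for the decay at infinity, the high-order ones near the origin) are actually doing the work.
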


\begin{proof}
Let us find an upper bound for
\begin{equation*}
\mes_{n+1} \mathcal{M}_n(I) = \int\limits_{\mathbf{a}\in \mathcal{M}_n(I)} d{\bf a}.
\end{equation*}
Any polynomial in $\mathcal{M}_n(I)$ can be written in the form \eqref{eq-p-repr} with $\alpha, \beta \in I$.
So we use the substitution \eqref{eq_zamena2} to evaluate this integral. The condition ${\bf a} \in \mathcal{M}_n(I)$ is equivalent to the system of inequalities
\begin{equation}\label{eq_grM2}
\left\{
\begin{array}{l}
|a_n| = |b_{n-2}| \le 1,\\
|a_{n-1}| = |b_{n-3} - (\alpha+\beta) b_{n-2}| \le 1,\\
|a_k| = |b_{k-2} - (\alpha+\beta) b_{k-1} + \alpha\beta b_k| \le 1, \ \ \ k = 2,\ldots,n-2,\\
|a_1| = |- (\alpha+\beta) b_0 + \alpha\beta b_1| \le 1,\\
|a_0| = |\alpha\beta b_0| \le 1,\\
a \le \alpha < b, \\
a \le \beta < b.
\end{array}
\right.
\end{equation}

Using Lemma \ref{lm-jacob}, we obtain
\begin{equation}\label{eq-M-ineq}
\mes_{n+1} \mathcal{M}_n(I) \le \int\limits_{\mathcal{M}_n^*(I)} |\alpha-\beta| \cdot |g(\mathbf{b}, \alpha) g(\mathbf{b},\beta)|\, d\mathbf{b}\, d\alpha\, d\beta,
\end{equation}
where $\mathcal{M}_n^*(I)$ is new domain of integration defined by \eqref{eq_grM2} and $g(\mathbf{b},x) = b_{n-2}x^{n-2}+\ldots+b_1 x + b_0$.

This expression cannot be written as an equality since polynomials can have three or more roots in the interval $I$, and then several representations of the form \eqref{eq-p-repr} will exist. If a polynomial has $k>2$ different roots on $I$, then there exist $\binom{k}{2}$ different representations of this type.

Let $I\subset [-2,2]$. Then for all $\alpha, \beta \in I$ we can write
\[
|\alpha+\beta| \le 4, \qquad |\alpha\beta| \le 4,
\]
and therefore for any $(b_{n-2},\dots, b_1, b_0, \alpha, \beta) \in \mathcal{M}_n^*(I)$ we have
\begin{equation}\label{eq-M*bound}
\left\{
\begin{array}{l}
|b_{n-2}| \le 1,\\
|b_{n-3}| \le 1 + 4 |b_{n-2}|,\\
|b_{k-2}| \le 1 + 4 |b_{k-1}| + 4 |b_k|, \qquad k = 2,\ldots,n-2,\\
|\alpha| \le 2, \\
|\beta| \le 2.
\end{array}
\right.
\end{equation}
In other words, for $I\subset[-2,2]$ the domain $\mathcal{M}_n^*(I)$ is enclosed within some box, whose dimensions are determined by $n$ only.

Let us rewrite the multiple integral in \eqref{eq-M-ineq} as follows:
\[
\mes_{n+1} \mathcal{M}_n(I) \le \int\limits_{I\times I} |\alpha-\beta|\, d\alpha\, d\beta \int\limits_{\mathcal{M}_n^*(\alpha,\beta)} |g(\mathbf{b},\alpha) g(\mathbf{b},\beta)|\,d\mathbf{b},
\]
where $\mathcal{M}_n^*(\alpha,\beta) = \{{\bf b} \in \mathbb{R}^{n-1} : \|A(\alpha,\beta) \mathbf{b}\|_\infty \le 1 \}$,
and $A(\alpha, \beta)$ is the $(n+1)\times(n-1)$ matrix from \eqref{eq_zamena2}.

Let us denote $G({\bf b},\alpha,\beta)=g(\mathbf{b},\alpha) g(\mathbf{b},\beta)$, and
\[
\psi(\alpha,\beta)=\int\limits_{\mathcal{M}_n^*(\alpha,\beta)}|G({\bf b},\alpha,\beta)| d{\bf b}.
\]

As stated above, for $\alpha,\beta \in [-2,2]$ the domain $\mathcal{M}_n^*(\alpha,\beta)$ lies within a box of dimensions determined only by $n$. The function $G(\mathbf{b}, \alpha, \beta)$ is a polynomial, and its values cannot exceed a certain constant determined only by $n$ for all $\mathbf{b}\in \mathcal{M}_n^*(\alpha,\beta)$ and all $\alpha, \beta\in I$.
Hence, there exists a constant $c_2(n)$ which depends only on $n$ such that $0 < \psi(\alpha,\beta) \le c_2(n)$ for all $\alpha,\beta \in I$. Thus, we obtain for $I\subset [-2,2]$
\[
\mes_{n+1} \mathcal{M}_n(I) \le c_2(n) |I|^3.
\]

Let $I\subset \mathbb{R}\setminus [-1,1]$.
In this case, we substitute the polynomial $p(x) = a_n x^n + \ldots + a_1 x + a_0$ by the polynomial $q(x) = x^n p(1/x) = a_0 x^n + \ldots + a_{n-1} x + a_n$, and the interval $I$ by $I^* = (1/b, 1/a) \subset [-1,1]$, $ab>0$.
Clearly, under these substitutions the number of roots remains invariant.

Now we can apply the substitution \eqref{eq_zamena2} to the vector $(a_0, a_1, \dots, a_n)$, which leads to
\[
\mes_{n+1} \mathcal{M}_n(I) = \mes_{n+1} \mathcal{M}_n(I^*) \le c_2(n) |I^*|^3 = \frac{c_2(n)}{|ab|^3} |I|^3,
\]
proving the lemma.
\end{proof}

\begin{lemma}\label{lm_emptyinterv}
Let $x_0=a/b$ with $a\in\mathbb{Z}$, $b\in\mathbb{N}$ and $\gcd(a,b)=1$.
Then there are no algebraic numbers $\alpha$ of degree $\deg \alpha = n$ and height $H(\alpha)\le Q$ in the interval $|x-x_0|\le r_0$, where
\[
r_0=r_0(x_0, Q)=\frac{c_3(n)}{b^n Q},
\]
and $c_3(n)$ is an effective constant depending on $n$ only.

A similar fact can be stated for a neighborhood of infinity: no algebraic number $\alpha$ of degree $\deg(\alpha)=n$ and height $H(\alpha)\le Q$ lies in the set
\[
\{x\in\mathbb{R}: |x|\ge Q+1\}.
\]
\end{lemma}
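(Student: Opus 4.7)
The plan is to handle the two assertions separately, in both cases by examining the minimal polynomial $p(x) = a_n x^n + \dots + a_0 \in \mathbb{Z}[x]$ of a hypothetical $\alpha \in \mathbb{A}_n(Q)$ lying in the forbidden region.

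For the neighborhood of infinity, I will use the classical Cauchy root bound. Under the assumption $|\alpha| > 1$, the identity $a_n \alpha^n = -(a_{n-1}\alpha^{n-1} + \dots + a_0)$ combined with $|a_k| \le Q$ gives $|a_n|\,|\alpha|^n < Q\,|\alpha|^n/(|\alpha| - 1)$, and rearranging yields $|\alpha| \le 1 + Q/|a_n| \le Q + 1$.

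For the empty interval around $x_0 = a/b$, the strategy is to contrast a Diophantine lower bound with an analytic upper bound on $|p(x_0)|$. On the Diophantine side, the integer $b^n p(a/b) = \sum_k a_k a^k b^{n-k}$ is nonzero---for $n \ge 2$ because the irreducible polynomial $p$ has no rational roots, and for $n = 1$ because one may assume $\alpha \ne x_0$---so $|p(x_0)| \ge b^{-n}$. On the analytic side, since $p(\alpha) = 0$ and $|\alpha - x_0| \le r_0$, the mean value theorem, combined with the crude termwise estimate $|p'(\xi)| \le n^2 Q (1 + |\xi|)^{n-1}$, yields $|p(x_0)| \le r_0 \cdot n^2 Q\,(1 + |x_0| + r_0)^{n-1}$. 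Chaining these two bounds produces
\[
r_0 \;\ge\; \frac{1}{n^2 \, b^n \, Q \, (1 + |x_0| + r_0)^{n-1}}.
\]

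The main obstacle is converting this $x_0$-dependent inequality into the uniform bound $r_0 \ge c_3(n)/(b^n Q)$. When $|x_0| \le 1$ the factor $(1 + |x_0| + r_0)^{n-1}$ is bounded by a constant depending only on $n$ (once one observes that $r_0 \le 1$ may be assumed without loss), and the claim follows with $c_3(n) = (n^2 \cdot 3^{n-1})^{-1}$. The case $|x_0| > 1$ will be reduced to the previous one via the inversion symmetry $\Phi_n(Q, S) = \Phi_n(Q, S^{-1})$ from Lemma~\ref{lm-func-eq}: non-existence of $\alpha \in \mathbb{A}_n(Q)$ in $[x_0 - r_0, x_0 + r_0]$ is equivalent, through the involution $x \mapsto 1/x$, to non-existence of $\alpha \in \mathbb{A}_n(Q)$ in a small interval around $y_0 = b/a$ with $|y_0| < 1$, reducing to the previously handled bounded case.
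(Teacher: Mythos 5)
Your treatment of the neighbourhood of infinity and of the case $|x_0|\le 1$ is correct and is essentially the paper's own argument: the paper expands $p$ in a Taylor series at $x_0$ and combines $|p(x_0)|\ge b^{-n}$ with $|p^{(k)}(x_0)|\ll_n H(p)$, which is the same pair of estimates as your Liouville bound plus mean value theorem; at infinity the paper's inequality $|p(\beta)|\ge|\beta|^n-Q(|\beta|^{n-1}+\dots+1)\ge 1$ is your Cauchy bound rearranged. The weak point is the final step, the reduction of $|x_0|>1$ to the bounded case via $x\mapsto 1/x$ --- which, to be fair, the paper also dismisses in a single sentence. Quantitatively: the involution maps $[x_0-r_0,\,x_0+r_0]$ onto an interval about $y_0=b/a$ of half-length roughly $r_0b^2/a^2$, whereas the bounded case applied at $y_0$ (whose reduced denominator is $|a|$, not $b$) only guarantees emptiness within radius $c_3(n)/(|a|^{n}Q)$ of $y_0$. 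Matching the two forces $r_0\ll c_3(n)/(|a|^{n-2}b^2Q)$, which for $n\ge 3$ and $|a|\gg b$ is far smaller than the claimed $c_3(n)/(b^nQ)$; no choice of a constant depending only on $n$ closes this gap.

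Indeed, the uniform radius $c_3(n)/(b^nQ)$ is simply not attainable for large $|x_0|$: the polynomial $p(x)=x^3-Qx^2+1$ is primitive, irreducible for $Q\ge 3$, of height $Q$, and has a real root $\alpha$ with $|\alpha-Q|\asymp Q^{-2}$, so for $x_0=Q$ (hence $b=1$) the interval $|x-x_0|\le c_3(3)/Q$ contains a point of $\mathbb{A}_3(Q)$ once $Q$ is large. What your argument (and the paper's) actually yields for $|x_0|>1$ is an empty interval of radius $\asymp_n(|a|^{n-2}b^2Q)^{-1}$, i.e.\ $\asymp_{n,x_0}Q^{-1}$ for fixed $x_0$ --- and this $x_0$-dependent version is all that the proof of Theorem \ref{thm1} uses, since there the interval length is stated as $\asymp_{x_0}Q^{-1}$. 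So you should either restrict the stated radius to $|x_0|\le 1$ or build the dependence on $|a|$ into $r_0$; as written, the sentence ``reducing to the previously handled bounded case'' hides a step that cannot be carried out with a constant depending on $n$ alone. (Two cosmetic points: take $c_3(n)$ strictly smaller than $(n^2 3^{n-1})^{-1}$ so the chained inequalities give a genuine contradiction at the endpoint $|\alpha-x_0|=r_0$, and note that for $n=1$ the exclusion of $\alpha=x_0$ must be added to the statement, not just the proof.)
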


Note that the statement of the lemma implies that
$\Phi_n(Q,S) = 0$ if $S \cap (-Q-1,Q+1) = \varnothing$, and
$\Phi_n(Q,S) = \#\mathbb{A}_n(Q)$ if $(-Q-1,Q+1)\subseteq S$.

\begin{proof}
Let $p(x)=a_n x^n+\ldots+a_1 x+a_0\in\mathbb{Z}[x]$ with $H(p)\le Q$.

We develop $p(x)$ into the Taylor series:
\begin{equation}\label{eq_px}
p(x)=p(x_0)+\sum_{k=1}^n \frac{p^{(k)}(x_0)}{k!} (x-x_0)^k.
\end{equation}

Let $p(x_0)\ne 0$. Then
\begin{equation}\label{eq_pa}
|p(x_0)| \ge \frac{1}{b^n}.
\end{equation}

Assuming $|x_0|\le 1$, we have $|p^{(k)}(x_0)| \ll_n H(p)$ for $k=0,\ldots,n$,
and thus
\begin{equation}\label{eq_dpa}
\left|\sum_{k=1}^n \frac{p^{(k)}(x_0)}{k!} (x-x_0)^k\right| \ll_n
H(p)\,|x-x_0|\sum_{k=0}^{n-1}|x-x_0|^k.
\end{equation}

Development \eqref{eq_px} and estimates \eqref{eq_pa}, \eqref{eq_dpa} imply that any integral polynomial of degree~$n$ and height at most $H$ has no roots $x\ne x_0$ in the circle
\begin{equation}\label{eq_xa}
|x-x_0| < \frac{c_3(n)}{b^n H},
\end{equation}
where $c_3(n)$ is an effective constant.

The case $|x_0|>1$ can be reduced to the case $|x_0| < 1$ with the use of mapping $x\to x^{-1}$.

Let $|\beta|\ge Q+1$. Then
\[
|p(\beta)| \ge |\beta|^n - Q|\beta|^{n-1} - \ldots - Q|\beta|-Q \ge 1.
\]
Thus, the number $\beta$ cannot be a root of the polynomial $p(x)$.
The lemma is proved.
\end{proof}


\section{The proof of the main theorem}

Recall that $I=(\alpha,\beta]$ is a bounded interval.
Let \emph{prime} polynomials be defined as irreducible primitive polynomials with positive leading coefficients.
Clearly, the distribution of algebraic numbers can be expressed in terms of prime polynomials.
Let $N_n(Q,k,I)$ denote the number of prime polynomials $p$ of degree $\deg p = n$ and height $H(p)\le Q$ which have
exactly $k$ roots in the set $I$.
Clearly, we have
\begin{equation}
\Phi_n(Q,I) = \sum_{k=1}^n k N_n(Q, k, I).
\end{equation}

Let $\mathcal{G}_n(k,S)$ denote the set of polynomials $p\in\mathbb{R}[x]$ with $\deg p = n$ and $H(p)\le 1$ that have exactly $k$ roots in a set $S$.
From Lemmas \ref{lm_pr_mn} and \ref{lm_skrt}, we have:
\[
N_n(Q,k,I) = \frac{\mes_{n+1} \mathcal{G}_n(k,I)}{2\zeta(n+1)}\cdot Q^{n+1}  + O\!\left(Q^n (\ln Q)^{\ell(n)}\right),
\]
where the implicit constant in the big-O notation depends only on $n$.

\begin{lemma}
The function $\widehat{\Phi}_n(S)$ defined as
\[
\widehat{\Phi}_n(S) = \sum_{k=1}^n k \mes_{n+1} \mathcal{G}_n(k,S)
\]
is additive and bounded for all $S \subseteq \mathbb{R}$.
\end{lemma}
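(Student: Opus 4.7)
The plan is to rewrite $\widehat{\Phi}_n(S)$ as an integral, over the unit cube of coefficient vectors, of the real-root-counting function, from which both properties fall out essentially for free. Identify a polynomial $p \in \mathbb{R}[x]$ with its coefficient vector $(a_n, \dots, a_0) \in \mathbb{R}^{n+1}$, write $C := \{p : H(p) \le 1\} = [-1,1]^{n+1}$, and for $S \subseteq \mathbb{R}$ let $N_p(S)$ denote the number of real roots of $p$ lying in $S$. For any fixed $S$ the sets $\mathcal{G}_n(0,S), \mathcal{G}_n(1,S), \dots, \mathcal{G}_n(n,S)$ partition $C$ up to the measure-zero hyperplane $\{a_n = 0\}$ on which $\deg p < n$, and on $\mathcal{G}_n(k,S)$ we have $N_p(S) = k$. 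Therefore
\[
\widehat{\Phi}_n(S) = \sum_{k=1}^n k \cdot \mes_{n+1} \mathcal{G}_n(k,S) = \int_C N_p(S)\, dp.
\]

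With this representation in hand, additivity follows from the trivial pointwise identity $N_p(S_1 \cup S_2) = N_p(S_1) + N_p(S_2)$ valid for disjoint $S_1, S_2$ and for almost every $p \in C$ (the exceptional set, where $p$ has a repeated root on the common boundary, lies on the discriminant hypersurface and is null). Integrating over $C$ yields $\widehat{\Phi}_n(S_1 \cup S_2) = \widehat{\Phi}_n(S_1) + \widehat{\Phi}_n(S_2)$. Boundedness is equally immediate: any polynomial of degree $n$ has at most $n$ real roots, so $N_p(S) \le n$ almost everywhere on $C$, whence
\[
0 \le \widehat{\Phi}_n(S) \le n \cdot \mes_{n+1} C = n \cdot 2^{n+1}
\]
uniformly in $S$.

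There is no substantive obstacle here; the only point requiring care is justifying that the partition $\{\mathcal{G}_n(k,S)\}_{k=0}^n$ of $C$ is exhaustive up to a null set \emph{independently} of $S$, which is what legitimizes the passage from the finite sum to the integral and, simultaneously, shows that the representation respects set-theoretic operations on $S$. If countable additivity along nested chains is needed later (for instance, in deducing the existence of the density $\phi_n$), it can be read off from the same integral representation by dominated convergence with the constant majorant $n$.
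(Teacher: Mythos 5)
Your proof is correct and is in substance the same as the paper's: the representation $\int_C N_p(S)\,dp$ is just the weighted sum $\sum_k k\,\mes_{n+1}\mathcal{G}_n(k,S)$ written as the integral of a simple function, the pointwise identity $N_p(S_1\cup S_2)=N_p(S_1)+N_p(S_2)$ is exactly the paper's set identity $\mathcal{G}_n(\nu,S_1\cup S_2)=\bigcup_{k=0}^{\nu}\bigl(\mathcal{G}_n(k,S_1)\cap\mathcal{G}_n(\nu-k,S_2)\bigr)$ read off at the level of level sets, and the bound $N_p\le n$ yields the same constant $n\,2^{n+1}$. One minor remark: the almost-everywhere caveat in your additivity step is unnecessary, since for disjoint $S_1,S_2$ each root lies in at most one of the two sets and the identity holds for every $p$.
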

\begin{proof}
If $S_1 \cap S_2 = \varnothing$, then
\begin{multline*}
\widehat{\Phi}_n(S_1) + \widehat{\Phi}_n(S_2) = \sum_{k=1}^\infty \sum_{m=0}^\infty k \mes_{n+1}\left(\mathcal{G}_n(k,S_1) \cap \mathcal{G}_n(m,S_2)\right) + \\
+ \sum_{m=1}^\infty \sum_{k=0}^\infty m \mes_{n+1}\left(\mathcal{G}_n(k,S_1) \cap \mathcal{G}_n(m,S_2)\right) = \\
= \sum_{\nu=1}^\infty \nu \sum_{k=0}^\nu \mes_{n+1}\left(\mathcal{G}_n(k,S_1) \cap \mathcal{G}_n(\nu-k,S_2)\right) = \widehat{\Phi}_n(S_1 \cup S_2).
\end{multline*}
Here we have used the facts that $\mathcal{G}_n(k,S) = \varnothing$ for $k>n$ and that
\[
\mathcal{G}_n(\nu,S_1\cup S_2) = \bigcup_{k=0}^\nu \left(\mathcal{G}_n(k,S_1) \cap \mathcal{G}_n(\nu-k,S_2)\right).
\]
For any $S\subseteq \mathbb{R}$, we have
\[
\widehat{\Phi}_n(S)\le n\sum_{k=1}^n \mes_{n+1}\mathcal{G}_n(k,S) \le n \mes_{n+1} ([-1,1]^{n+1}) = n 2^{n+1}.
\]
The lemma is proved.
\end{proof}

Let us prove that $\widehat{\Phi}_n(I)$ can be written as
the integral of a continuous positive function over $I$.
Define
\begin{equation}\label{eq-DI-def}
\mathcal{D}(I) = \left\{{\bf p}\in\mathbb{R}^{n+1} : p(\alpha)p(\beta)< 0, \ H(p)\le 1\right\},
\end{equation}
where ${\bf p} = (p_n,\ldots,p_1,p_0)$ is the vector form of a polynomial $p(x)=p_n x^n+\ldots+p_1 x+p_0$.
Clearly, for any vector in $\mathcal{D}(I)$ the corresponding polynomial has the odd number of roots lying in the interval $I$.

From Lemma \ref{lm_Fbound}, we have
\begin{equation}\label{eq-dPhi-D}
\widehat{\Phi}_n(I) = \mes_{n+1} \mathcal{D}(I) + O(|I|^3),
\end{equation}
where the implicit constant in the big-O notation depends only on the degree $n$.

Now let us calculate $\mes_{n+1} \mathcal{D}(I) = \int_{\mathcal{D}(I)} d{\bf p}$.
The domain $\mathcal{D}(I)$ can be defined by the following system of inequalities
\begin{equation}\label{eq-sys}
\begin{cases}
|p_i| \le 1, \qquad 0\le i\le n,\\
f_*(p_1,\dots,p_n) \le p_0 \le f^*(p_1,\dots,p_n),
\end{cases}
\end{equation}
where
\begin{align*}
f_*(p_1,\dots,p_n) &= \min\left\{-\sum_{k=1}^n p_k \alpha^k, -\sum_{k=1}^n p_k \beta^k\right\},\\
f^*(p_1,\dots,p_n) &= \max\left\{-\sum_{k=1}^n p_k \alpha^k, -\sum_{k=1}^n p_k \beta^k\right\}.
\end{align*}

To simplify notation, define a function $h$ as
\begin{multline*}
h(p_1,\ldots,p_n) := f^*(p_1,\dots,p_n) - f_*(p_1,\dots,p_n) = \\
= (\beta-\alpha)\left| \sum_{k=1}^n p_k \sum_{i=0}^{k-1} \alpha^i \beta^{k-i-1} \right|.
\end{multline*}
Consider regions
\[
\Delta_* := \Delta_n(\alpha) \cap \Delta_n(\beta), \qquad
\Delta^* := \Delta_n(\alpha) \cup \Delta_n(\beta),
\]
where
\[
\Delta_n(x)=\left\{(p_1,\ldots,p_n)\in\mathbb{R}^n : \max\limits_{1\le i\le n} |p_i|\le 1, \
\left|\sum_{k=1}^n p_k x^k \right|\le 1 \right\}.
\]
For all $(p_1,\dots,p_n)\in \Delta_*$, both the inequalites
\[
|f_*(p_1,\dots,p_n)| \le 1, \qquad |f^*(p_1,\dots,p_n)| \le 1
\]
hold, and thus, the bound $|p_0|\le 1$ gives no effect in \eqref{eq-sys}. For any $(p_1,\dots,p_n)\not\in \Delta^*$,
the system of inequalities \eqref{eq-sys} is inconsistent for $\alpha$ and $\beta$ being close enough.
Hence, we have the estimate
\begin{equation*}
\int\limits_{\Delta_*} h(p_1,\ldots,p_n)\,dp_1\ldots dp_n
\ \le \ \mes_{n+1} \mathcal{D}(I) \ \le \
\int\limits_{\Delta^*} h(p_1,\ldots,p_n)\,dp_1\ldots dp_n.
\end{equation*}
From above, it follows that
\begin{multline*}
\left|\mes_{n+1} \mathcal{D}(I) \ - \int\limits_{\Delta_n(\alpha)} h(p_1,\ldots,p_n)\,dp_1\ldots dp_n\right| \le \\
\le \int\limits_{\Delta^*\setminus \Delta_*} h(p_1,\ldots,p_n)\,dp_1\ldots dp_n.
\end{multline*}
It is easy to show that the difference of $\Delta^*$ and $\Delta_*$ has a small measure
\[
\mes_n \left(\Delta^* \setminus \Delta_*\right) = O(\beta-\alpha), \qquad \beta\to \alpha,
\]
where the implicit constant in the big-O notation depends on the degree $n$ only.

Therefore, as $\beta$ tends to $\alpha$,
we obtain for any $\alpha\in\mathbb{R}$
\begin{equation}\label{eq-limit-DI}
\lim_{\beta\to\alpha} \frac{\mes_{n+1} \mathcal{D}(I)}{(\beta-\alpha)} \ = \
\int\limits_{\Delta_n(\alpha)} \left|\sum_{k=1}^n k p_k \alpha^{k-1}\right|\,dp_n\ldots dp_1.
\end{equation}

Thus, \eqref{eq-dPhi-D} can be rewritten as
\[
\widehat{\Phi}_n(I) = \phi_n(\alpha) |I| + o(|I|),
\]
where $\phi_n(\cdot)$ is defined by \eqref{eq5}.
Therefore, we have
\[
\widehat{\Phi}_n(I)  = \int\limits_I \phi_n(x) dx.
\]
The statements \eqref{eq_thm1} and \eqref{eq5} are proved.
It is also easy to see that $\phi_n(x)$ is a continuous positive function on $\mathbb{R}$.

The upper bound for the remainder term in \eqref{eq_thm1} is obtained from the error term in \eqref{eq-numb-of-p} and the bound \eqref{eq-num-of-reduc} for the number of reducible polynomials.

Lemma \ref{lm_emptyinterv} gives examples of intervals, for which the remainder term in \eqref{eq_thm1} has the order $O(Q^n)$.
Namely, for every rational $x_0$, there exist an interval with midpoint $x_0$ and length $|I|\asymp_{x_0} Q^{-1}$ that contains no algebraic numbers~$\alpha$ with $\deg(\alpha) = n$ and $H(\alpha)\le Q$.
Therefore, for this interval, the error in \eqref{eq_thm1} is equal to the main term, which is of the order $O(Q^n)$.
The rational numbers are everywhere dense in $\mathbb{R}$.
Hence, we have the second statement of Theorem \ref{thm1}.

Differentiating \eqref{eq:F-x} and \eqref{eq:Fxinv} w.r.t. $x$ yields:
\[
\rho_n(-x) = \rho_n(x), \qquad \frac{1}{x^2}\,\rho_n\!\left(\frac{1}{x}\right) = \rho_n(x).
\]
As stated earlier, $\rho_n(x) = \gamma_n^{-1} \phi_n(x)$,
where $\gamma_n := \int_\mathbb{R} \phi_n(x)\,dx$.
So we have \eqref{eq:phieven} and~\eqref{eq:phiinv}.
The proof is now complete.


\section{Final remarks}\label{sec-rem}

\paragraph{\bf Remark 1}
As noted above, the rational numbers, i.e. the algebraic numbers of the first degree, are distributed uniformly in the interval $[-1,1]$.
Following the way proposed in the paper, one can easily obtain
\begin{equation}\label{eq-Phi1}
\Phi_1(Q,I) = 
\frac{Q^2}{2\zeta(2)} \int\limits_I \phi_1(x)\,dx + O(Q\ln Q),
\end{equation}
where
\[
\phi_1(x) = \frac{1}{\max(1,x^2)}.
\]
This formula corresponds to the general equation \eqref{eq5} when $n=1$ and agrees with the well-known result on the distribution of the Farey sequence (see \cite{Mik1949} for an elementary proof).

In the remainder term in \eqref{eq-Phi1}, the logarithmic factor comes from Lem\-ma \ref{lm_skrt}.
Here we show that this factor actually vanishes as $|I|\to 0$.
A similar effect exists in the case $n=2$ (see \cite{Ko2015} for details).

By definition $\Phi_1(Q,\mathbb{R}) = \#\mathbb{A}_1(Q)$.
Obviously,
\[
\mathbb{A}_1(Q) = \left\{\frac{a}{b} : a\in\mathbb{Z},\ b\in\mathbb{N},\ \gcd(a,b)=1,\ \max(|a|,|b|)\le Q\right\}.
\]
It easy to observe that $\#\mathbb{A}_1(Q) = 4(\#\mathcal{F}_Q - 2) + 3$
where the addition of~$3$ corresponds to the three points: $\pm 1$ and $0$. So we have
\begin{equation}\label{eq-Phi1R}
\Phi_1(Q,\mathbb{R}) = 4 \#\mathcal{F}_Q - 5.
\end{equation}

\begin{theorem}
\begin{equation}\label{eq-ext-Farey}
\sup_{I\subseteq \mathbb{R}}\left|\frac{\Phi_1(Q,I)}{\Phi_1(Q,\mathbb{R})} - \frac14 \int\limits_I \phi_1(x)\,dx\right| \asymp \frac{1}{Q},
\end{equation}
where the supremum is taken over the intervals of the real line.
\end{theorem}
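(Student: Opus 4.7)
The claim will be deduced from Dress's identity $D_Q=1/Q$ (equation~\eqref{eq-DQ}) together with the symmetries of Lemma~\ref{lm-func-eq}. Both the upper and the lower bound in \eqref{eq-ext-Farey} will be reduced to properties of the classical Farey sequence on $[0,1]$, where the counting density $\phi_1\equiv 1$.

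\textbf{Reduction to $I\subseteq[0,1]$.} First I would observe that an arbitrary interval $I\subseteq\mathbb{R}$ decomposes into at most four subintervals sitting in $[0,1]$, $[-1,0]$, $[1,+\infty)$, or $(-\infty,-1]$. The involutions $x\mapsto -x$ and $x\mapsto 1/x$ carry each of the latter three regions bijectively onto $[0,1]$; by Lemma~\ref{lm-func-eq} these involutions preserve $\Phi_1(Q,\cdot)$, and a change of variable combined with the evenness of $\phi_1$ and the identity $\phi_1(1/x)=x^2\phi_1(x)$ (immediate from $\phi_1(x)=1/\max(1,x^2)$) shows that they also preserve $\int\phi_1(x)\,dx$. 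Hence by the triangle inequality it is enough to prove \eqref{eq-ext-Farey} with the supremum restricted to $I\subseteq[0,1]$, at the cost of at most a constant factor in the implicit constants.

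\textbf{Upper and lower bounds.} For $I\subseteq[0,1]$ we have $\tfrac14\int_I\phi_1\,dx=|I|/4$, and $\mathbb{A}_1(Q)\cap I$ coincides with $\mathcal{F}_Q\cap I$ up to possibly the single point $0$, so $\Phi_1(Q,I)=\#(\mathcal{F}_Q\cap I)+O(1)$. Applying Dress's identity to the pair $[0,b]\supseteq[0,a]$ yields
\[
\bigl|\#(\mathcal{F}_Q\cap I)-|I|\cdot\#\mathcal{F}_Q\bigr|\le \frac{2\,\#\mathcal{F}_Q}{Q}.
\]
Combining this with \eqref{eq-Phi1R} and the expansion $1/(4\#\mathcal{F}_Q-5)=1/(4\#\mathcal{F}_Q)+O(\#\mathcal{F}_Q^{-2})$, together with $\#\mathcal{F}_Q\asymp Q^2$, I obtain $\Phi_1(Q,I)/\Phi_1(Q,\mathbb{R})-|I|/4=O(1/Q)$, which is the required upper bound. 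For the matching lower bound I invoke the sharpness of Dress's result: there exists $\alpha^*=\alpha^*(Q)\in[0,1]$ realising (or arbitrarily closely approximating) the equality in \eqref{eq-DQ}, so that $\bigl|\#(\mathcal{F}_Q\cap[0,\alpha^*])/\#\mathcal{F}_Q-\alpha^*\bigr|=1/Q$. Taking $I^*=[0,\alpha^*]$ and re-running the same expansion shows that the normalised discrepancy equals $\pm 1/(4Q)+O(1/Q^2)$, hence is $\gg 1/Q$.

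\textbf{Main obstacle.} No tool beyond \eqref{eq-DQ} is required; the real work is careful bookkeeping. The additive constant $-5$ in \eqref{eq-Phi1R}, the isolated points $0$ and $\pm 1$, the interval endpoints produced by the symmetries $x\mapsto-x$ and $x\mapsto 1/x$, and the passage from $1/(4\#\mathcal{F}_Q-5)$ to $1/(4\#\mathcal{F}_Q)$ must each be shown to contribute only $O(1/Q^2)$ errors, so that the main term $1/(4Q)$ in the lower bound is not spoiled. Once this is verified, the theorem follows in one line from the sharp value $D_Q=1/Q$.
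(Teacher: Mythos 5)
Your proposal is correct and follows essentially the same route as the paper: reduce to $I\subseteq[0,1]$ via the symmetries $x\mapsto -x$ and $x\mapsto 1/x$ of Lemma \ref{lm-func-eq} (under which $\int_I\phi_1$ transforms into the length of the image interval), then invoke Dress's identity $D_Q=1/Q$ together with \eqref{eq-Phi1R} for both bounds, taking the lower bound from an interval $[0,\alpha^*]$ nearly attaining $D_Q$. The paper merely compresses the bookkeeping you spell out (the $-5$, the isolated points, the four-piece decomposition) into ``we obtain at once'' and ``can be easily deduced''.
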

\begin{proof}
From \eqref{eq-DQ} and \eqref{eq-Phi1R} we obtain at once
\[
\sup_{I\subseteq [0,1]} \left|\frac{\Phi_1(Q,I)}{\Phi_1(Q,\mathbb{R})} - \frac14 |I|\right| \asymp \frac{1}{Q}.
\]

Let $I\subseteq [1,+\infty)$ and $I^{-1} := \left\{x^{-1} : x\in I\right\}$. Obviously, $\Phi_1(Q,I) = \Phi_1(Q,I^{-1})$ and $I^{-1}\subset(0,1]$, and
$|I^{-1}| = \int_I \frac{dx}{x^2}$.
Therefore,
\[
\sup_{I\subseteq [1,+\infty)} \left|\frac{\Phi_1(Q,I)}{\Phi_1(Q,\mathbb{R})} - \frac14\int_I \frac{dx}{x^2}\right| \le
\sup_{I\subseteq [0,1]} \left|\frac{\Phi_1(Q,I)}{\Phi_1(Q,\mathbb{R})} - \frac14 |I|\right|.
\]

For any other placement of intervals proof can be easily deduced from the cases considered above.
\end{proof}

Using \eqref{eq-Walfisz}, we readily obtain from \eqref{eq-ext-Farey}:
\begin{align}
\Phi_1(Q,I) =& \frac{3}{\pi^2}\, Q^2 \int\limits_I \frac{dx}{\max(1,x^2)} \\
+& O\!\left(Q (\ln Q)^{2/3} (\ln\ln Q)^{4/3} \int\limits_I \frac{dx}{\max(1,x^2)} + Q\right). \nonumber
\end{align}
Note that in the remainder term the integral vanishes as $|I|\to 0$.

\medskip

\paragraph{\bf Remark 2}
The counting density on the interval $|x|\le \frac{\sqrt{2}-1}{\sqrt{2}}\approx 0.29$
can be expressed as
\begin{equation}\label{eq-analit-f}
\phi_n(x) = \frac{2^{n-1}}{3} \left(3+\sum_{k=1}^{n-1} (k+1)^2 x^{2k}\right).
\end{equation}
\begin{proof}
To simplify the calculation of \eqref{eq5}, let us place the following restriction on $x$: $|p_n x^n + \ldots + p_2 x^2 + p_1 x| \le 1$ for all $p_i$ such that $\max_{1\le i\le n} |p_i|\le 1$. This is equivalent to the inequality $|x|\le x_0(n)$, where $x_0(n)$ is the positive solution of the equation $x^n + \ldots + x^2 + x = 1$. For $|x|\le x_0(n)$, we have
\[
\phi_n(x) = 2 \int\limits_{\widetilde{\Delta}_n(x)} \left(\sum_{k=1}^n k p_k x^{k-1}\right) dp_1\ldots\,dp_n,
\]
where
\[
\widetilde{\Delta}_n(x) = \left\{(p_1,\ldots,p_n) \in \mathbb{R}^n : \ \max\limits_{1\le k\le n} |p_k| \le 1, \ \sum_{k=1}^n k p_k x^{k-1} \ge 0 \right\}.
\]
Let $f_n(x; p_n, \dots, p_2)$ denote the function $n p_n x^{n-1} + \ldots + 2 p_2 x$. The variable $p_1$ assumes the values between $\max(-f_n(x; p_n,\dots, p_2), -1)$ and $1$. Assume $f_n(x; p_n,\dots,p_2)\le 1$ for all $p_i$ such that $\max_{2\le i\le n} |p_i|$. This restriction is equivalent to $|x|\le x_1(n)$, where $x_1(n)$ is the positive solution of $n x^{n-1} + \ldots + 2x = 1$. Clearly, $x_1(n)\le x_0(n)$ and $\lim_{n\to\infty} x_1(n) = \frac{\sqrt{2}-1}{\sqrt{2}}$.

For $|x|\le x_1(n)$ we have
\begin{align*}
\phi_n(x) &= 2 \int_{\substack{|p_i|\le 1\\2\le i\le n}} \left(\int_{-f_n(x;p_n,\dots,p_2)}^1 \!\left(f_n(x;p_n,\dots, p_2) + p_1\right) dp_1\right) dp_2\ldots\,dp_n \\
&= \int_{\substack{|p_i|\le 1\\2\le i\le n}} \left(f_n(x;p_n,\dots, p_2)+1\right)^2 dp_2\ldots\,dp_n.
\end{align*}
From the symmetry, the integrals of all the non-square terms are equal to zero, which leads to~\eqref{eq-analit-f}.
\end{proof}
Now it is easy to see that the distribution of algebraic numbers of degree $n$, $n\ge 2$, is non-uniform.
However, rational numbers, i.e., algebraic numbers of the first degree, are distributed uniformly in the interval $[-1,1]$ (see Remark 1).

For $\phi_2(x)$, a piecewise formula involving only rational functions is obtained in \cite{Ko2015}.

\medskip

\paragraph{\bf Remark 3}
It is possible to generalize Theorem \ref{thm1} to height functions different from~$H(p)$. In general, a height function is defined as follows.

\begin{definition}
A function $\mathfrak{h}: \mathbb{R}^{n+1} \to [0,+\infty)$ satisfying the conditions

\begin{enumerate}
\item $\mathfrak{h}(t\mathbf{v}) = |t|\, \mathfrak{h}(\mathbf{v})$ for all $t\in\mathbb{R}$ and all $\mathbf{v}\in\mathbb{R}^{n+1}$;
\item the set $\{\mathbf{v}\in\mathbb{R}^{n+1} : \mathfrak{h}(\mathbf{v}) \le 1\}$ is a convex body;
\item $\mathfrak{h}(\mathbf{v}) = 0$ if and only if $\mathbf{v} = 0$;
\item $\mathfrak{h}\left(v_n, -v_{n-1},\dots, (-1)^{n-1}v_1, (-1)^n v_0\right) = \mathfrak{h}(v_n, v_{n-1},\dots, v_1, v_0)$  for all $\mathbf{v}\in\mathbb{R}^{n+1}$;
\item $\mathfrak{h}\left(v_0, v_1,\dots, v_{n-1}, v_n \right) = \mathfrak{h}(v_n, v_{n-1},\dots, v_1, v_0)$ for all $\mathbf{v}\in\mathbb{R}^{n+1}$,
\end{enumerate}
is called a \emph{height function}.
\end{definition}
Note that the last two conditions in the definition correspond to \eqref{eq:phieven} and \eqref{eq:phiinv} respectively.

Now we can define the respective height function for algebraic numbers. Let $p$ be the minimal polynomial of an algebraic number $\alpha$.
Clearly, for any given algebraic $\alpha$, the same minimal polynomial $p$ is obtained for any height function $\mathfrak{h}$, and the height $\mathfrak{h}(\alpha)$ can be defined as $\mathfrak{h}(p)$.

Let us define the distribution of algebraic numbers with respect to the height function $\mathfrak{h}(\alpha)$.
In the general case, the counting density with respect to $\mathfrak{h}$ assumes the~form
\[
\phi_n(\mathfrak{h};x) = \int\limits_{\Delta_n(\mathfrak{h};x)} \left|\sum_{k=1}^n k p_k x^{k-1}\right| dp_1\dots dp_n,
\]
where
\[
\Delta_n(\mathfrak{h};x) = \left\{(p_1,\dots,p_n)\in\mathbb{R}^n : \mathfrak{h}\!\left(p_n, \dots, p_2, p_1, -\sum_{k=1}^n p_k x^k\right) \le 1 \right\}.
\]
Naturally, this function behaves differently for different height functions $\mathfrak{h}$.

To illustrate this fact, let us calculate the counting density with respect to the spherical norm $\|\cdot\|_2$. The region $\mathcal{D}(I)$ defined by \eqref{eq-DI-def} is then replaced by two $(n+1)$-dimensional spherical wedges of radius 1 with an acute angle $\theta(\alpha,\beta)$ formed by two planes with normal vectors $\mathbf{n}_1 = (\alpha^n,\dots,\alpha,1)$ and $\mathbf{n}_2 = (\beta^n,\dots,\beta,1)$. The volume of one wedge equals
\[
\frac{\pi^{\frac{n+1}{2}}}{\Gamma\left(\frac{n+3}{2}\right)}\: \frac{\theta}{2\pi},
\]
and thus the formula \eqref{eq-limit-DI} can be written as
\[
\phi_n(\|\cdot\|_2; \alpha) = \frac{\pi^{\frac{n-1}{2}}}{\Gamma\left(\frac{n+3}{2}\right)} \lim_{\beta\to \alpha} \frac{\theta(\alpha,\beta)}{\beta-\alpha}.
\]
The angle $\theta(\alpha,\beta)$ tends to zero as $\beta$ tends to $\alpha$, allowing us to substitute $\theta$ by $\sin\theta$,
which can be calculated from the scalar product of $\mathbf{n}_1$ and $\mathbf{n}_2$:
\[
\sin \theta = \sqrt{1- \frac{(\mathbf{n}_1 \mathbf{n}_2)^2}{\mathbf{n}_1^2 \mathbf{n}_2^2}}
\]
Note that if a function $f:\mathbb{R}^2 \to \mathbb{R}$ is three times continuously differentiable and $f(x,y)=f(y,x)$ for any $x,y\in \mathbb{R}$, then by the Taylor development of $f(\cdot,\cdot)$ at $(x,x)$ as $y$ tends to $x$, one can obtain
\begin{multline*}
f(x,x)f(y,y) - (f(x,y))^2 = \\
= \left(f(x,x) \left.\frac{\partial^2 f}{\partial x\partial y}\right|_{y=x} - \left(\left.\frac{\partial f}{\partial x}\right|_{y=x}\right)^2\right) (y-x)^2 + O\left((y-x)^3\right).
\end{multline*}
Taking $f(x,y) = \sum_{k=0}^n (xy)^k = \frac{(xy)^{n+1}-1}{xy-1}$, we obtain after transformations
\begin{multline*}
\mathbf{n}_1^2 \mathbf{n}_2^2 - (\mathbf{n}_1 \mathbf{n}_2)^2 = \\
= \frac{(\alpha^{2n+2}-1)^2-(n+1)^2 \alpha^{2n} (\alpha^2-1)^2}{(\alpha^2-1)^4} (\beta-\alpha)^2 + O((\beta-\alpha)^3).
\end{multline*}
Finally, we have
\[
\phi_n(\|\cdot\|_2; x) = \frac{\pi^{\frac{n-1}{2}}}{\Gamma\left(\frac{n+3}{2}\right)} \;
\sqrt{\frac{1}{(x^2-1)^2} - \frac{(n+1)^2 x^{2n}}{(x^{2n+2}-1)^2}}.
\]
Clearly, this function is quite different from \eqref{eq-analit-f}. Up to the constant factor, the function $\phi_n(\|\cdot\|_2; x)$ coincides with the density function of zeros of random polynomials of $n$-th degree with independent identically normally distributed coefficients
(see \cite{Kac43}). In the excellent paper by A.~Edelman and E.~Kostlan \cite[\S\S 2.2--2.4]{EdeKos95}, one can find an interesting geometrical interpretation of this case.

\medskip

\paragraph{\bf Remark 4}
The main result can be reformulated in terms of sequences.

\begin{corollary}
Let the real algebraic numbers of degree $n$ are ordered in a~sequence $(\alpha_i)_{i=1}^\infty$ in such a~way that $H(\alpha_i) \le H(\alpha_{i+1})$ for any $i\ge 1$.
Let $\mathfrak{N}_n(N, S)$ be the number of elements of the truncated sequence $(\alpha_i)_{i=1}^N$ lying in a set $S$.
Then for any interval $I \subseteq \mathbb{R}$ we have
\[
\mathfrak{N}_n(N, I) = N\int\limits_I \rho_n(x)\,dx + O\!\left(N^{\frac{n}{n+1}} (\ln N)^{\ell(n)}\right),
\]
where the implicit constant in the big-O notation depends only on the degree~$n$.
\end{corollary}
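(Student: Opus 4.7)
The plan is to reduce the sequence-indexed count $\mathfrak{N}_n(N,I)$ to the height-indexed count $\Phi_n(Q,I)$ and then to invoke Theorem \ref{thm1} twice: once with $I=\mathbb{R}$ in order to invert the relation between $N$ and the height threshold, and once with the prescribed interval $I$ to produce the asymptotic. First I would set $Q := H(\alpha_N)$. Since heights of integral polynomials lie in $\mathbb{N}$ and the sequence is height-monotonic, every element of $\mathbb{A}_n(Q-1)$ appears among $\alpha_1,\ldots,\alpha_N$, while no $\alpha_i$ with $i\le N$ has height exceeding $Q$. For every $S\subseteq\mathbb{R}$ this yields the sandwich
\[
\Phi_n(Q-1,S) \le \mathfrak{N}_n(N,S) \le \Phi_n(Q,S),
\]
which also absorbs the non-uniqueness of the ordering among numbers of equal height.

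Next, taking $S=\mathbb{R}$ and setting $\gamma_n := \int_\mathbb{R} \phi_n(x)\,dx$, Theorem \ref{thm1} gives
\[
N = \frac{\gamma_n}{2\zeta(n+1)}\, Q^{n+1} + O\!\left(Q^n (\ln Q)^{\ell(n)}\right),
\]
so in particular $Q \asymp N^{1/(n+1)}$. Inverting the main term yields
\[
\frac{\gamma_n\, Q^{n+1}}{2\zeta(n+1)} = N + O\!\left(N^{n/(n+1)} (\ln N)^{\ell(n)}\right).
\]

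Applying Theorem \ref{thm1} to the interval $I$ and using $\rho_n = \gamma_n^{-1} \phi_n$ together with the trivial bound $\int_I \rho_n(x)\,dx \le 1$ (valid since $\rho_n$ is a probability density) to absorb the secondary error, I would obtain
\[
\Phi_n(Q,I) = \frac{\gamma_n\, Q^{n+1}}{2\zeta(n+1)} \int_I \rho_n(x)\,dx + O\!\left(Q^n (\ln Q)^{\ell(n)}\right) = N \int_I \rho_n(x)\,dx + O\!\left(N^{n/(n+1)} (\ln N)^{\ell(n)}\right).
\]
The same estimate holds for $\Phi_n(Q-1,I)$ because $Q \asymp N^{1/(n+1)}$ and $(Q-1)^{n+1} = Q^{n+1} + O(Q^n)$. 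Combining these two bounds with the sandwich from the first step produces the corollary.

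The argument introduces no essentially new ideas beyond Theorem \ref{thm1}; the only steps that require care are the inversion from a $Q$-asymptotic to an $N$-asymptotic and the bookkeeping that carries the $Q^n (\ln Q)^{\ell(n)}$ error cleanly through both the change of parameter and the sandwich. I do not expect any serious technical obstacle.
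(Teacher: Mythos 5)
Your proposal is correct and follows essentially the same route as the paper: the same sandwich $\Phi_n(Q-1,I)\le\mathfrak{N}_n(N,I)\le\Phi_n(Q,I)$ with $Q=H(\alpha_N)$, the same inversion of $N\asymp Q^{n+1}$ via Theorem \ref{thm1} applied to $\mathbb{R}$, and the same final application of Theorem \ref{thm1} to $I$ together with $\rho_n=\gamma_n^{-1}\phi_n$.
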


\begin{proof}
Let us denote $H(\alpha_N)$ by $Q_N$.
By the definition of the sequence $(\alpha_i)_{i=1}^\infty$, we have
\begin{gather}
\#\mathbb{A}_n(Q_N-1) \le N \le \#\mathbb{A}_n(Q_N), \label{eq-N-QN}\\
\Phi_n(Q_N-1, I) \le \mathfrak{N}_n(N, I) \le \Phi_n(Q_N, I). \label{eq-NI}
\end{gather}
Theorem \ref{thm1} yields
\[
\#\mathbb{A}_n(Q) = \frac{\gamma_n Q^{n+1}}{2\zeta(n+1)} + O\!\left(Q^n (\ln Q)^{\ell(n)}\right),
\]
where
\[
\gamma_n = \int\limits_{-\infty}^{+\infty} \phi_n(x)\, dx.
\]
Thus, from \eqref{eq-N-QN}, we obtain
\[
Q_N^{n+1} = \frac{N}{\gamma_n} + O\!\left(N^{\frac{n}{n+1}} (\ln N)^{\ell(n)}\right).
\]
Applying \eqref{eq_thm1}, \eqref{eq-NI} and the identity $\rho_n(x)=\gamma_n^{-1} \phi_n(x)$ proves the corollary.
\end{proof}

It is easy to construct an example of such ordering: if $\alpha$ and $\beta$ are algebraic numbers of degree $n$, let $\alpha$ precede $\beta$ if and only if $H(\alpha) < H(\beta)$ or $H(\alpha)=H(\beta)$ and $\alpha<\beta$:
\[
\alpha \prec \beta \iff
\left[
\begin{array}{l}
H(\alpha) < H(\beta),\\
H(\alpha) = H(\beta), \quad \alpha < \beta.
\end{array}
\right.
\]

Note that the ordering imposed on algebraic numbers with identical degrees and heights can be arbitrary: the number of algebraic numbers $\alpha$ with $\deg(\alpha) = n$ and $H(\alpha) = Q$ equals $O(Q^n)$, whereas for $H(\alpha)\le Q$ it is $O(Q^{n+1})$.

\section*{Acknowledgements}
The author is grateful to Prof. Vasili Bernik and Prof. Friedrich G{\"o}tze and Dr. Dmitry Zaporozhets and Prof. Victor Beresnevich for their interesting and helpful comments on historical background of the problem and on relations between the results of this paper and studies in the area of random polynomials.
The author wishes to thank the employees of the University of Bielefeld, where a substantial part of this work was done, for providing a stimulating research environment during his visits supported by CRC 701.
The author would like to thank Dr. Nikolai Kalosha for his assistance in writing the manuscript.
Thanks are also due to the two anonymous referees for their valuable remarks.


\end{document}